\newcommand{\C}{\mathbb{C}}
\newcommand{\Q}{\mathbb{Q}}
\newcommand{\Z}{\mathbb{Z}}
\newcommand{\B}{\mathcal{B}}
\newcommand{\oo}{\bf {Op.1}}
\newcommand{\ooo}{\bf {Op.2}}
\def\mathscr{\mathscr}
\def\mcal{\mathcal}
\def\block(#1,#2)#3{\multicolumn{#2}{c}{\multirow{#1}{*}{$ #3 $}}}
\newtheorem{theorem}{Theorem}[section]
\newtheorem{lemma}[theorem]{Lemma}
\newtheorem{proposition}[theorem]{Proposition}
\newtheorem{corollary}[theorem]{Corollary}
\newtheorem{conjecture}[theorem]{Conjecture}
\newtheorem{question}[theorem]{Question}
\theoremstyle{definition}
\newtheorem{example}[theorem]{Example}
\newtheorem{definition}[theorem]{Definition}
\newtheorem{remark}[theorem]{Remark}
\numberwithin{equation}{section}
\DeclareMathOperator{\Cone}{Cone}
\begin{document}
	\title{Unique toric structure on a Fano Bott manifold}
	
\begin{abstract}
	We prove that if there exists a $c_1$-preserving graded ring isomorphism between integral cohomology rings of two Fano Bott manifolds, then they are 
	isomorphic as toric varieties. As a consequence, we give an affirmative answer to McDuff's question on the uniqueness of a toric structure on a Fano Bott manifold.
\end{abstract}

\author{Yunhyung Cho}
\address{Department of Mathematics Education, Sungkyunkwan University, Seoul, Republic of Korea}
\email{yunhyung@skku.edu}

\author{Eunjeong Lee}
\address{Center for Geometry and Physics, Institute for Basic Science (IBS), Pohang, Republic of Korea}
\email{eunjeong.lee@ibs.re.kr}

\author{Mikiya Masuda}
\address{Osaka City University Advanced Mathematics Institute (OCAMI) \& Department
of Mathematics, Graduate School of Science, Osaka City University, Sumiyoshiku,
Sugimoto, 558-8585, Osaka, Japan}
\email{masuda@osaka-cu.ac.jp}

\author{Seonjeong Park}
\address{Department of Mathematical Sciences, KAIST, Daejeon, Republic of Korea}
\email{seonjeong1124@gmail.com}

\maketitle

\section{Introduction}
	
	To each symplectic manifold $(M,\omega)$,  one can associate the Hamiltonian diffeomorphism group $\mathrm{Ham}(M,\omega)$. 
	It is a normal subgroup of the symplectomorphism group $\mathrm{Symp}(M,\omega)$ and
	governs all possible Hamiltonian Lie group actions on $(M,\omega)$. The group $\mathrm{Ham}(M,\omega)$ is infinite dimensional and non-compact 
	in general, and it might possess more than one maximal tori with distinct conjugacy classes. It turned out that the number of conjugacy classes of maximal tori in 
	$\mathrm{Ham}(M,\omega)$,
	allowing conjugations by elements of $\mathrm{Symp}(M,\omega)$, is finite; it was proved by Karshon--Kessler--Pinsonnault \cite{KKP} and Pinsonnault \cite{Pin} in dimension four, 
	and by McDuff \cite[Proposition 3.1]{McD} in any dimension.

	Recall that a symplectic form $\omega$ is called {\em monotone} if $c_1(M) := c_1(TM,J) = \lambda \cdot [\omega]$ for some $\lambda > 0$ and an 
	$\omega$-compatible almost complex structure $J$ on $M$. 
	Throughout this paper, we always assume that $\lambda = 1$ 
	unless stated otherwise.	
	In this paper, we concern the following question posed by McDuff~\cite{McD}.
	
	\begin{question}[McDuff]\label{Q_1}\cite[Question~1.11]{McD}
		Let $(M,\omega)$ be a $2n$-dimensional closed monotone symplectic manifold. If $T_1$ and $T_2$ are n-tori in $\mathrm{Ham}(M,\omega)$, are $(M,\omega,T_1)$ 
		and $(M,\omega, T_2)$ equivariantly symplectomorphic? Equivalently, is the number of conjugacy classes of $n$-tori in $\mathrm{Ham}(M,\omega)$ precisely one?
	\end{question}
	
	Due to the Delzant's theorem \cite{Del},
	any closed symplectic toric manifold is equivariantly symplectomorphic to a smooth projective toric variety equipped with a torus invariant K\"{a}hler form.
	When a symplectic form is monotone, it is equivariantly symplectomorphic to a smooth Fano toric variety by the Kleiman's ampleness criterion \cite[Theorem 1 in Section III-1]{Kl}
	and its moment polytope becomes a 
	{\em reflexive}\footnote{A polytope is {\em reflexive} if 
	it is integral and has a unique interior lattice point such that the affine distance from the point to each facet is equal to one.} polytope.

	In the algebro-geometric aspects, Question \ref{Q_1} asks whether a smooth Fano toric variety may have more than one toric structure. McDuff \cite{McD} gave an affirmative answer 
	to Question \ref{Q_1} when $M = \C P^k \times \C P^m$, and Fanoe \cite{Fa} generalized McDuff's result to the case of $\C P^k$-bundle over $\C P^m$. 
	To the best of the authors' knowledge, Question \ref{Q_1} is still open.

	This paper is addressed to Question \ref{Q_1} in case that $M$ is a Bott manifold. 
	A {\em Bott tower}, first introduced by Grossberg and Karshon~\cite{GK_Bott_towers}, is an iterated $\C P^1$-bundle starting with a point
	\[
		\B_n \stackrel{\pi_n} \longrightarrow \B_{n-1} \stackrel{\pi_{n-1}} \longrightarrow \cdots \longrightarrow \B_1 = \C P^1 \stackrel{\pi_1} \longrightarrow \B_0 = \{ \mathrm{a ~point} \}
	\]
	where each $\B_i$ is obtained by projectivizing the direct sum of the trivial line bundle $\underline{\C}$ and a complex line bundle $\xi_{i}$ over $\B_{i-1}$, i.e.,
	$\B_i = P(\underline{\C} \oplus \xi_i)$. The total space $\B_n$ is called a {\em Bott manifold}.
	
	We may equip a natural complex structure on a Bott manifold by taking each $\xi_i$ as a holomorphic line bundle
	so that $\B_n$ becomes a complex manifold with a natural $(\C^*)^n$-action 
	constructed in an iterative way using a toric structure of a base space and a $\C^*$-action on a fiber at each stage, see~\cite[Section~I-7.6${}^\prime$]{Oda2}.
	Indeed, $\B_n$ is a smooth projective toric variety.
	
	Any Bott manifold $\B_n$ can be characterized by an $n \times n$ lower triangular 
	integer matrix called a {\em Bott matrix}. Roughly speaking, a Bott manifold $\B_n$ is a smooth projective toric variety,
	where the corresponding fan is combinatorially equivalent to the normal fan of the $n$-cube. After fixing one reference maximal cone of the fan and making it into the ``first quadrant'' 
	via some basis change by multiplying a suitable element of $\mathrm{GL}(n,\Z)$, we obtain residual $n$ (column) vectors which form a Bott matrix. Note that a Bott matrix presentation 
	of a Bott manifold is not unique and it depends on the choice of a reference cone and an element of $\mathrm{GL}(n,\Z)$. In Section \ref{secOperationsOnBottMatrices}, we illustrate some 
	operations (temporarily denoted by {\oo} and {\ooo} in this paper) on the set of Bott matrices and describe cohomology ring isomorphisms induced by the operations that will 
	be crucially used in the proof of our main theorem.

	Not all Bott manifolds are Fano.  For instance, there are only two Fano Bott manifolds in dimension four: $\C P^1 \times \C P^1$ and a Hirzebruch surface 
	$P(\underline{\C} \oplus \mcal{O}(1))$. Recently, Suyama \cite{Su} classified all Fano Bott manifolds in terms of Bott matrices, see Section \ref{secBottManifolds} for the detail.
	Now we state our main theorem which says that every Fano Bott manifold is characterized (as a toric variety) by its integral cohomology ring and the first Chern class. 
	
	\begin{theorem}[Theorem~\ref{thm_main_body}]\label{thm_main}
		Let $X$ and $Y$ be Fano Bott manifolds. If there exists a $c_1$-preserving graded ring isomorphism 
		\[
			\varphi \colon H^*(X;\Z) \rightarrow H^*(Y;\Z),
		\]
		then $X$ and $Y$ are isomorphic\footnote{We say that two toric varieties $X$ and $Y$ are {\em isomorphic as toric varieties} if there exists a toric isomorphism $\phi$
		from $X$ to $Y$, i.e., $\phi(t \cdot x) = \phi(t) \cdot \phi(x)$ for every element $t$ in the torus $T_X \subset X$ and $x \in X$. In fact, it was proved in~\cite{Be} that there exists a 
		toric isomorphism if $X$ and $Y$ are isomorphic as abstract varieties.}
		as toric varieties, i.e., the fans associated to $X$ and $Y$ are unimodularly equivalent.
	\end{theorem}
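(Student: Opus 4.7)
The plan is to translate the $c_1$-preserving ring isomorphism into constraints on the defining Bott matrices of $X$ and $Y$, and then combine Suyama's classification of Fano Bott manifolds with the operations \oo{} and \ooo{} from Section~\ref{secOperationsOnBottMatrices} to match the two matrices up to toric isomorphism. The starting point is the standard presentation
\[
H^*(\B_n;\Z) \;=\; \Z[x_1,\ldots,x_n]\big/\bigl\langle x_i^2 - x_i L_i^A \bigr\rangle_{1\le i \le n},
\]
where each $L_i^A$ is a linear form in $x_1,\ldots,x_{i-1}$ whose coefficients come from the $i$-th row of the Bott matrix $A$ of $X$, and $c_1(X)$ is an explicit linear combination of the $x_i$'s read off from $A$. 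The analogous presentation applies to $Y$ with a Bott matrix $B$ and generators $y_1,\ldots,y_n$.

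First I would study how a graded ring isomorphism $\varphi$ can behave in degree two: writing $\varphi(x_i)=\sum_j c_{ij} y_j$ for some $C=(c_{ij}) \in \mathrm{GL}(n,\Z)$, compatibility with the quadratic relations forces strong rigidity on $C$. Indeed, in a Bott manifold the subset of classes $\alpha \in H^2(\B_n;\Z)$ satisfying $\alpha^2 \in \alpha \cdot H^2$ has a restricted combinatorial structure read off from the Bott matrix, so after applying \oo{} and \ooo{} on both sides to put $A$ and $B$ into a canonical form, $C$ is forced to be close to a signed permutation matrix. Next, I would invoke Suyama's classification to enumerate the canonical shapes of Fano Bott matrices, which shrinks the combinatorial possibilities for the pair $(A,B)$ dramatically.

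Finally, the condition $\varphi(c_1(X))=c_1(Y)$ becomes a linear Diophantine identity in the entries of $C$ and of $A,B$. Going through Suyama's list family by family, I would argue that the combination of (i) the quadratic rigidity of $C$, (ii) the Fano constraints on $A$ and $B$, and (iii) the $c_1$-preserving identity leaves only the possibility that $A$ and $B$ are related by a sequence of \oo{} and \ooo{} moves, which by construction is equivalent to $X$ and $Y$ being isomorphic as toric varieties, i.e., to a unimodular equivalence of the associated fans. An induction on $n$ using the $\C P^1$-bundle structure $\B_n \to \B_{n-1}$ is a natural way to organize the argument, using that the ``top'' generator $x_n$ and its image under $\varphi$ essentially determine the fibration datum modulo the ambiguities resolved by \oo{} and \ooo{}.

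The main obstacle I foresee is the case analysis itself. Suyama's list contains several families whose Bott matrices are superficially similar, and the $c_1$-preservation is what ultimately distinguishes them, since the plain cohomology ring alone is not enough and cohomological rigidity for arbitrary Bott manifolds is still open in general. I expect the hardest sub-cases to be those where two distinct Fano families share all the ``easy'' invariants (Betti numbers, ring presentation up to permutation of generators, gcd-type data of the linear forms $L_i$) and are separated only by the delicate interplay between the Fano positivity inequalities and the first Chern class equation, where a careful reduction via \oo{} and \ooo{} is needed to avoid an exploding number of subcases.
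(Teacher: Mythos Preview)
Your proposal is a plan, not a proof, and it is missing the structural lever that makes the paper's argument go through without any family-by-family enumeration. The paper never performs a case analysis over Suyama's classification. Instead it introduces the ``balanced'' generators $y_i = x_i - \tfrac12\alpha_i \in H^2(\B(A);\Q)$ and invokes the result of Choi--Masuda--Murai (Proposition~\ref{prop_isomorphism_y}) that \emph{every} graded ring isomorphism between cohomologies of Bott manifolds sends $y_i$ to $q_i\,y'_{\sigma(i)}$ for some scalars $q_i\in\Q^\times$ and a permutation $\sigma$. The $c_1$-preservation then forces all $q_i=1$ (Theorem~\ref{thm_q}), so $\varphi(2y_i)=2y'_{\sigma(i)}$ exactly. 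From there a single uniform induction on the first index $k$ with $\varphi(x_k)\neq x'_k$ works: divisibility by $2$ and the Fano constraint $a_{i,j}\in\{-1,0,1\}$ pin down the discrepancy, and one application of {\oo} (when $\sigma(k)=k$) or {\ooo} (when $\sigma(k)>k$) to the target matrix restores $\varphi(x_k)=x''_k$. No list of Fano families is consulted.

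Two concrete gaps in your outline: first, the assertion that compatibility with the quadratic relations forces $C$ to be ``close to a signed permutation matrix'' is exactly the content of Proposition~\ref{prop_isomorphism_y}, which is a nontrivial external input you neither cite nor reprove; without it the structure of $\varphi$ in degree two is not under control. Second, ``going through Suyama's list family by family'' is not a viable strategy as stated: Theorem~\ref{thm_Fano} is a set of entrywise conditions, not a finite list, and the number of Fano Bott matrices grows with $n$, so there is no case analysis to terminate. The paper only uses from Suyama the fact that the off-diagonal entries lie in $\{-1,0,1\}$, which is what makes the parity argument in \eqref{eq_proof_of_main_1} and \eqref{eq_final} bite.
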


	We call a monotone symplectic manifold $(M,\omega)$ a {\em monotone Bott manifold} if $M$ is diffeomorphic to a Bott manifold.
	Using Theorem \ref{thm_main}, we obtain a positive answer to Question~\ref{Q_1}.
	
	\begin{corollary}\label{cor_main}
		Any monotone Bott manifold has a unique toric structure.
	\end{corollary}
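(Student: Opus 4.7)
The plan is to reduce the corollary to Theorem~\ref{thm_main} via the Delzant correspondence. Let $(M,\omega)$ be a monotone Bott manifold and let $T_1,T_2\subset\mathrm{Ham}(M,\omega)$ be two $n$-tori. By Delzant's theorem combined with Kleiman's ampleness criterion recalled above, each triple $(M,\omega,T_i)$ is equivariantly symplectomorphic to a smooth Fano toric variety $(X_i,\omega_i)$ equipped with a $T$-invariant K\"ahler form; let $\Phi_i\colon M\to X_i$ denote the equivariant symplectomorphism. Because $\omega$ is monotone with $\lambda=1$, we have $[\omega_i]=c_1(X_i)$.

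The key intermediate step is to identify each $X_i$ as a Fano Bott manifold, so that Theorem~\ref{thm_main} applies. Since $\Phi_i$ is a diffeomorphism, $X_i$ is diffeomorphic to the Bott manifold $M$, so $H^*(X_i;\Z)$ is isomorphic as a graded ring to the cohomology of a Bott manifold. I expect this to be the main obstacle: what I need is a rigidity statement asserting that a smooth Fano toric variety with the cohomology ring of a Bott manifold must itself be a Bott manifold. This can be approached either by invoking an appropriate cohomological rigidity result from the literature on toric topology, or by a direct combinatorial argument: the moment polytope $\Delta_i$ is a simple reflexive $n$-polytope with $2n$ facets (since $b_2(X_i)=n$) and $2^n$ vertices (since $\chi(X_i)=2^n$), from which one aims to conclude that $\Delta_i$ is combinatorially an $n$-cube and hence that the fan of $X_i$ is combinatorially equivalent to the normal fan of the $n$-cube, which is the characterization of Bott manifolds recalled in the introduction.

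Granted that $X_1$ and $X_2$ are Fano Bott manifolds, the composition $\Phi_2\circ\Phi_1^{-1}\colon X_1\to X_2$ pulls $\omega_2$ back to $\omega_1$ and therefore pulls $c_1(X_2)$ back to $c_1(X_1)$, so the induced graded ring isomorphism $\varphi\colon H^*(X_2;\Z)\to H^*(X_1;\Z)$ is $c_1$-preserving. Theorem~\ref{thm_main} then provides a toric isomorphism $X_1\cong X_2$. Since the associated moment polytopes are unimodularly equivalent reflexive polytopes representing the common class $c_1$, Delzant's theorem promotes this toric isomorphism to an equivariant symplectomorphism $(X_1,\omega_1)\to (X_2,\omega_2)$; composing with $\Phi_1$ and $\Phi_2^{-1}$ yields an element of $\mathrm{Symp}(M,\omega)$ conjugating $T_1$ to $T_2$, which proves the uniqueness of the toric structure.
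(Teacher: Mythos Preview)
Your proof follows essentially the same route as the paper: reduce to Theorem~\ref{thm_main} via Delzant, after first verifying that each $X_i$ is a Fano Bott manifold. The paper handles that intermediate step by citing \cite[Corollary~3.5 and Theorem~5.5]{MP} directly (see Remark~\ref{rem_MP}), which is exactly the ``rigidity result from the literature'' you allude to. One caution: your alternative direct combinatorial sketch is not sufficient as written---knowing that $\Delta_i$ is simple with $2n$ facets and $2^n$ vertices fixes the $f$-vector but does not by itself force $\Delta_i$ to be combinatorially a cube; the argument in \cite{MP} genuinely uses the ring structure of $H^*$, not just the Betti numbers.
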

	
	\begin{proof}		
		Suppose that $(M,\omega)$ is a $2n$-dimensional monotone Bott manifold and $T_1$, $T_2$ are $n$-tori in $\mathrm{Ham}(M,\omega)$.
		From the Delzant's theorem \cite{Del}, each $T_i$-action makes $M$ into a toric Fano variety (which we denote by $X_i$) with $T_i$-invariant complex structure $J_i$ on $M$.
		Note that each $J_i$ can be chosen to be $\omega$-compatible so that $c_1(X_1) = c_1(X_2) = [\omega] \in H^2(M; \Z)$.
		
		On the other hand, it follows from~\cite[Corollary 3.5 and Theorem~5.5]{MP} that any smooth projective toric variety whose integral cohomology ring is isomorphic to that of a Bott manifold is in fact isomorphic 
		to a Bott manifold as a toric variety. (See Remark \ref{rem_MP} for the detail.) Thus we may assume that $X_1$ and $X_2$ are Fano Bott manifolds.
		Then the identity map $H^*(M;\Z) \rightarrow H^*(M;\Z)$ induces a graded ring isomorphism 
		\[
			H^*(X_1; \Z) \rightarrow H^*(X_2; \Z), \quad \quad c_1(X_1) = [\omega] \mapsto [\omega] = c_1(X_2).
		\]
		Thus the result follows from Theorem~\ref{thm_main}.
	\end{proof}
	
	It is worth mentioning a relation between Theorem \ref{thm_main} and a problem posed by the third author and Suh \cite[Problems~1 and~4]{MS}
	which asks whether two smooth complete toric varieties having isomorphic cohomology rings (as graded rings) are diffeomorphic or not. This problem is now called
	the {\em cohomological rigidity} for toric varieties.
	There are many partial affirmative answers to the problem. For instance, two smooth complete toric varieties with Picard number~$2$ are diffeomorphic if and only if their integral 
	cohomology rings are isomorphic as graded rings, see~\cite{CMS10_trans}. We also refer the reader to~\cite{CMS11,BEMPP} and references therein for recent accounts of this problem.
	
	Inspired by Theorem \ref{thm_main}, we pose the following conjecture.
	
	\begin{conjecture}\label{conj_Fano_rigidity}
		Suppose that $X$ and $Y$ are smooth  toric Fano varieties. If there exists a $c_1$-preserving graded ring isomorphism between their integral cohomology rings, 
		then $X$ and $Y$ are isomorphic as toric varieties.  
	\end{conjecture}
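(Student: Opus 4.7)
The overall plan is to reconstruct the fan $\Sigma_X$, up to unimodular equivalence, from the pair $(H^*(X;\Z), c_1(X))$ alone. Any $c_1$-preserving graded ring isomorphism $\varphi \colon H^*(X;\Z) \to H^*(Y;\Z)$ then automatically carries the reconstruction from $X$ to $Y$ and produces a $\mathrm{GL}(n,\Z)$-equivalence of fans. I would split the reconstruction into three stages: (a) identify the multiset of ray-divisor classes $\{[D_\rho]\}_{\rho \in \Sigma_X(1)}$ inside $H^2(X;\Z)$; (b) extract the combinatorial type of $\Sigma_X$ from the Stanley--Reisner relations among the $[D_\rho]$; (c) recover the primitive ray generators $v_\rho \in N$ from the linear relations among the $[D_\rho]$.

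For stage (a), the aim is to characterize $\{[D_\rho]\}$ cohomologically as the essentially unique decomposition of $c_1$ into effective classes $a_1,\ldots,a_r$ whose products respect the Stanley--Reisner pattern, in the sense that some $k$-fold product $a_{i_1}\cdots a_{i_k}$ vanishes precisely when the corresponding ``rays'' do not span a cone. The Fano hypothesis should be essential here: ampleness of $-K_X$ forces the effective cone to be polyhedral, and Batyrev's theory of primitive collections and primitive relations for smooth Fano toric varieties should furnish the right intrinsic fingerprint for the $[D_\rho]$. Stage (b) then follows from the vanishing/nonvanishing pattern of products in $H^*(X;\Z)$. For stage (c), the kernel of the surjection $\Z^{\Sigma_X(1)} \twoheadrightarrow H^2(X;\Z)$, $e_\rho \mapsto [D_\rho]$, is exactly the image of $M \hookrightarrow \Z^{\Sigma_X(1)}$, $u \mapsto (\langle u, v_\rho \rangle)_\rho$, so the primitive generators $v_\rho$ are determined up to a $\mathrm{GL}(N)$-change of basis, yielding the unimodular equivalence of fans.

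The main obstacle is stage (a). In a general smooth Fano toric variety, several distinct rays can yield the same cohomology class (already in $\CP^n$, all $n+1$ ray divisors represent the hyperplane class), so one must genuinely work with a multiset rather than a set, and the effective cone need not be simplicial. Unlike the Bott case treated in Theorem~\ref{thm_main_body}, one cannot rely on an iterated $\CP^1$-bundle structure or a Bott matrix to pin down the decomposition. An appealing alternative route is induction on the Picard number via toric Mori-theoretic extremal contractions: each such contraction is either divisorial or a fibration onto a lower-dimensional smooth toric variety, and one would need to show (i) that $\varphi$ detects at least one extremal contraction purely ring-theoretically, and (ii) that a $c_1$-preserving isomorphism descends to the contraction base and restricts compatibly to the fiber or exceptional divisor, so that Theorem~\ref{thm_main_body} applies whenever the reduction lands in the Bott world and ambient cohomological rigidity results (e.g.\ \cite{BEMPP}) handle the rest. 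Either approach ultimately hinges on translating Fano positivity into purely ring-theoretic conditions on $(H^*(X;\Z), c_1)$, and it is at this translation that I expect the bulk of the work to lie.
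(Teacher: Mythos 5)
The statement you are proving is stated in the paper as a \emph{conjecture}, not a theorem: the paper offers no proof of it, and explicitly records that it is only known in special cases (Bott manifolds, which is Theorem~\ref{thm_main_body} of this paper; Picard number $2$ in \cite{CLMP}; small dimension or large Picard number in \cite{HKM}). So there is no proof in the paper to compare yours against, and your text is, by your own account, a research plan rather than a proof. The decisive gap is your stage (a): you must show that the multiset of ray-divisor classes $\{[D_\rho]\}$ is determined, up to the symmetries of the fan, by the pair $(H^*(X;\Z),c_1(X))$ alone, and you give no argument for this --- you only name Batyrev's primitive collections as a hoped-for source of an ``intrinsic fingerprint.'' Without that uniqueness, nothing transports along $\varphi$ and stages (b) and (c) never get off the ground (those two stages are indeed standard once (a) is granted: the kernel of $\Z^{\Sigma(1)}\twoheadrightarrow \mathrm{Pic}(X)$ recovers $M$ and hence the rays). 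Moreover, any candidate characterization of the $[D_\rho]$ must use strict positivity of $-K_X$ in an essential, quantitative way: the paper's Example~\ref{ex-hir} and Remark~\ref{rem_weak_Fano} exhibit a $c_1$-preserving graded ring isomorphism between the non-isomorphic weak Fano surfaces $\mathcal{H}_0$ and $\mathcal{H}_2$, so a purely formal ``decomposition of $c_1$ into effective classes with the right Stanley--Reisner vanishing pattern'' cannot suffice --- it would apply verbatim to that example and yield a contradiction. Your alternative route via extremal contractions has the same unresolved core in step (i): detecting a contraction ring-theoretically is again the problem of locating distinguished divisor classes inside $H^2$.

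It is also worth noting how different your plan is from what the paper actually does in the one case it settles. For Fano Bott manifolds the paper never attempts an intrinsic reconstruction of the fan; instead it uses the explicit presentation \eqref{eq_coh_bott}, the rigidity of the classes $y_i$ under any isomorphism (Proposition~\ref{prop_isomorphism_y} and Theorem~\ref{thm_q}), Suyama's constraint that all Bott-matrix entries lie in $\{0,\pm1\}$ (Theorem~\ref{thm_Fano}), and a divisibility-by-$2$ argument to match the matrices column by column after normalizing with the operations {\oo} and {\ooo}. None of that machinery (a preferred generating set $x_1,\dots,x_n$, a triangular matrix, entries bounded by $1$ in absolute value) is available for a general smooth toric Fano variety, which is precisely why the general statement remains a conjecture.
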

	
	Conjecture \ref{conj_Fano_rigidity} was verified for some other classes of smooth toric Fano varieties. 
	Indeed, the authors confirmed Conjecture \ref{conj_Fano_rigidity} for smooth toric Fano varieties 
	with Picard number 2, whose proof will be provided in an upcoming manuscript \cite{CLMP}.
	Also the third author together with Higashitani and Kurimoto \cite{HKM} proved Conjecture \ref{conj_Fano_rigidity} 
	for smooth toric Fano varieties with small dimension ($\dim X_\C \leq 4$) or with large Picard number.)
	
	Note that if Conjecture \ref{conj_Fano_rigidity} is true, then the answer to Question \ref{Q_1} is positive. More precisely, if $(M,\omega,T_1)$ and $(M,\omega,T_2)$ are two 
	toric structures over the same monotone symplectic manifold $(M,\omega)$, then the identity map on $H^*(M;\Z)$ satisfies the hypothesis in Conjecture \ref{conj_Fano_rigidity}. 
	Therefore, Conjecture \ref{conj_Fano_rigidity} can be thought as a stronger version of Question ~\ref{Q_1}.

	As a final remark, we would like to mention a recent work of Pabiniak and Tolman. In  \cite{PT}, they considered the following question which they called 
	{\em symplectic cohomological rigidity}.
	
	\begin{question}\label{conj_symplectic_rigidity}\cite[p.3]{PT}
		Let $(M_1,\omega_1)$ and $(M_2,\omega_2)$ be symplectic toric manifolds. If there exists a graded ring isomorphism between their integral cohomology rings 
		sending $[\omega_1]$ to $[\omega_2]$, are $(M_1,\omega_1)$ and $(M_2,\omega_2)$ symplectomorphic?
	\end{question}
	They also gave a positive answer to Question \ref{conj_symplectic_rigidity} under the assumptions that 
	$\omega_1$ and $\omega_2$ are rational symplectic forms and that $H^*(M_1;\Q) \cong H^*(M_2;\Q) \cong H^*(\C P^1 \times \cdots \times \C P^1;\Q)$.
	
	This paper is organized as follows. In Section~\ref{secBottManifolds}, we explain the notion of Bott manifolds and also discuss their cohomological properties. 
	In Section~\ref{secOperationsOnBottMatrices}, we introduce two operations {\oo} and {\ooo} on Bott matrices 
	and prove that any two Bott matrices which represent isomorphic Bott manifolds  
	are obtained by applying those operations repeatedly. In Section~\ref{secMainTheorem}, we give the proof of Theorem~\ref{thm_main}. 
	
	\subsection*{Acknowledgements} 
		Y. Cho was supported by the National Research Foundation of Korea(NRF) grant funded by the Korea government(MSIP; Ministry of Science, ICT \& Future Planning) 				(NRF-2020R1C1C1A01010972). E. Lee was supported by IBS-R003-D1.
	M. Masuda was supported in part by JSPS Grant-in-Aid for Scientific Research 16K05152.
	S. Park was supported by the Basic Science Research Program through the National Research Foundation of Korea (NRF) funded by the Government of Korea
		(NRF-2018R1A6A3A11047606).  
		This work was partly supported by Osaka City University Advanced Mathematical Institute (MEXT Joint Usage/Research Center on Mathematics and Theoretical Physics).

\section{Bott manifolds}
\label{secBottManifolds}

We begin by recalling the definition of Bott towers and Bott manifolds.
\begin{definition}\cite[\S 2.1]{GK_Bott_towers}
	A \emph{Bott tower} $\B_\bullet$  is   an iterated $\C P^1$-bundle starting with a point:
	\begin{equation}\label{eq_Bott_tower}
		\begin{tikzcd}[row sep = 0.2cm]
			\mathcal{B}_{n} \arrow[r, "\pi_{n}"] \arrow[d, equal]&  
			\mathcal{B}_{n-1} \arrow[r, "\pi_{n-1}"] &
			\cdots \arrow[r, "\pi_2"] &
			\mathcal{B}_1 \arrow[r, "\pi_1"] \arrow[d, equal]&
			\mathcal{B}_0, \arrow[d, equal]\\
			P(\underline{\C}\oplus \xi_{n}) & & & \C P^1 & \{\text{a point}\}
		\end{tikzcd} 
	\end{equation}
	where each $\mathcal{B}_{i}$ is the complex projectivization of the Whitney sum of a holomorphic line bundle $\xi_{i}$ and the trivial bundle $\underline{\C}$ over $\mathcal{B}_{i-1}$.
	The total space $\B_n$ is called a {\em Bott manifold}.
\end{definition}

	Let $\gamma_j$ be the tautological line bundle over $\mathcal{B}_j$ and $\gamma_{i,j}$ the pullback of $\gamma_j$ by the projection 
	$\pi_i\circ \cdots \circ\pi_{j+1}\colon \mathcal{B}_{i}\to \mathcal{B}_j$ for $i>j$. We also define $\gamma_{j,j} \coloneqq\gamma_j$ for convenience. The Picard group of $\B_{i-1}$ is isomorphic to the free abelian group of rank $i-1$, and is generated by the line bundles $\gamma_{i-1,j}$ for $1 \leq j < i$ by~\cite[Exercise~II.7.9]{Har77}.	
	Therefore, for each $i=2,\ldots,n$, there exist $a_{i,j} \in \Z$ for 
	$1 \leq j < i$ such that	
	\begin{equation}\label{eq_xi_and_aij}
		\xi_{i}=\bigotimes_{1\leq j < i}\gamma_{i-1,j}^{\otimes a_{i,j}}.
	\end{equation}
	Thus the set of integers $\{ a_{i,j} \}_{1 \leq j < i \leq n}$ determines a Bott manifold.

	Each projection $\pi_{i} \colon \B_{i} \rightarrow \B_{i-1}$ admits a section induced from the zero section of $\underline{\C} \oplus \xi_{i}$. This implies that 
	\[
		\pi_i^* \colon H^*(\B_{i-1}; \Z) \rightarrow H^*(\B_i; \Z)
	\]
	is an injective ring homomorphism. By abuse of notation, we continue to write $x_j  \in H^2(\B_{i-1}; \Z)$ for the first Chern class of the dual of $\gamma_{i-1,j}$ for each $i > j$. 
	From \eqref{eq_xi_and_aij}, we obtain
	\[
		c_1(\xi_{i})= - \sum_{j=1}^{i-1}a_{i,j}x_j\in H^2(\B_{i-1}).
	\]
	
	On the other hand, a Bott manifold $\B_n$ is a smooth projective toric variety by the construction (cf.~\cite{GK_Bott_towers} and~\cite[Section~I-7.6${}^\prime$]{Oda2}).	
	If $\B_n$ is obtained from $\{ a_{i,j} \}_{1 \leq j < i \leq n}$, then it is known from~\cite[\S 3]{Civan} that its fan has $2n$ rays and their generators 
	are column vectors of the following matrix 
	\begin{equation}\label{eq_Bott_matrix}
		(E~|~A)\coloneqq\begin{bmatrix}
			1&   &  &           &  & -1            &                 &            &                         &    \\
			  & 1&  &           &  & a_{2,1}   &            -1  &            &                         &    \\
			  &   &1&           &  & a_{3,1}   & a_{3,1}     &  -1       &                         &    \\
			  &   &  & \ddots &  &\vdots      & \vdots       & \ddots &\ddots               &    \\
			  &   &  &            &1& a_{n,1} &  a_{n,2} &\cdots   & a_{n,n-1}   & -1\\
				\end{bmatrix},
	\end{equation}
	where $E$ is the $n \times n$ identity matrix. We call an integer matrix of the form $A$ in \eqref{eq_Bott_matrix} a {\em Bott matrix}.
	For a given Bott matrix $A$, we denote by
	\[
	{\bf e}_j \coloneqq \text{$j$-th standard basis vector}, \quad {\bf v}_j \coloneqq \text{$j$-th column vector of $A$}.
	\]
The fan $\Sigma_A$ of the Bott manifold $\B_n$  has $2^n$ number of maximal cones $\Sigma(n) = \{\sigma_I \mid I \subset [n] \}$, where $[n] \coloneqq \{1,\dots,n\}$ and 
	\begin{equation}\label{eq_fan}
	\sigma_I = \Cone\left(\{ {\bf e}_j \mid i \in I \} \cup \{ {\bf v}_j \mid j \in I^c \} \right).
	\end{equation}

\begin{example}
	Let $n = 2$. Then the Bott manifold determined by $\{a_{2,1}\}$ is a Hirzebruch surface $\mathcal{H}_{a_{2,1}} \coloneqq P(\underline{\C} \oplus \mathcal{O}(-a_{2,1}))$. 
There are four maximal cones:
	\[
	\sigma_{\emptyset} = \Cone\{ \mathbf{v}_1, \mathbf{v}_2\}, \quad \sigma_{\{1\}} = \Cone \{ \mathbf{e}_1, \mathbf{v}_2\},\quad 
	\sigma_{\{2\}} = \Cone\{\mathbf{e}_2, \mathbf{v}_1\}, \quad \sigma_{\{1,2\}} = \Cone\{ \mathbf{e}_1, \mathbf{e}_2\}. 
	\]
	We present the fan of $\mcal{H}_1$ in Figure~\ref{fig_B2}. 
	\begin{figure}[t]
		\begin{tikzpicture}[scale = 0.7]
		\draw[gray,->] (-2,0)--(2,0);
		\draw[gray, ->] (0,-2)--(0,2);
		
		\fill[pattern color = blue!50, pattern =horizontal lines, semitransparent] (0,0)--(2,0)--(2,2)--(0,2)--cycle;
		\fill[pattern color = blue!50!red, pattern = north west lines, semitransparent] (0,0)--(0,2)--(-2,2)--cycle;
		\fill[pattern color = red!50, pattern = vertical lines, semitransparent] (0,0)--(-1,1)--(-2,2)--(-2,-2)--(0,-2)--cycle;
		\fill[pattern color = green!50!black, pattern = north east lines, semitransparent] (0,0)--(2,0)--(2,-2)--(0,-2)--cycle;
		
		\draw[->, thick] (0,0)--(1,0) node[ below right ] {$\mathbf{e}_1$};
		\draw[->, thick] (0,0)--(0,1) node[ right] {$\mathbf{e}_2$};
		\draw[->, thick] (0,0)--(-1,1) node[above left] {$\mathbf{v}_1 $};
		\draw[->, thick] (0,0)--(0,-1) node[below] {$\mathbf{v}_2$};
		
		\node[fill=white, draw=blue] at (1.8,1.5) {$\sigma_{\{1,2\}}$};
		\node[fill=white, draw=blue!50!red] at (-0.8,2) {$\sigma_{\{2\}}$};
		\node[fill=white, draw=red] at (-1.5,0) {$\sigma_{\emptyset}$};
		\node[fill=white, draw = green!50!black] at (1.8,-1.5) {$\sigma_{\{1\}}$};
		\end{tikzpicture}	
		\caption{The fan of $\mcal{H}_{1}$.}\label{fig_B2}
	\end{figure}
\end{example}

Let $\mathcal{M}_{n}$ be the set of all Bott matrices of size $n\times n$, i.e., the set of all $n\times n$ lower triangular integer matrix with $-1$'s on the main diagonal as in \eqref{eq_Bott_matrix}. 
Since a Bott matrix $A$ determines the fan $\Sigma_A$ of a Bott manifold, we denote the corresponding Bott manifold by $\B(A)$.  Note that it happens that $\B(A)$ and $\B(A')$ are isomorphic as toric varieties even if $A$ and $A'$ are different.

\begin{remark}\label{rem_MP}
	For a given smooth projective toric variety $M$, if its integral cohomology ring  is isomorphic to that of a certain Bott manifold $\B$ as graded rings, then the fan of $M$ is combinatorially equivalent to that of $\B$, i.e., it is combinatorially equivalent to the normal fan of the $n$-cube  (see~\cite[Theorem~5.5]{MP}).  Moreover, such a toric variety $M$ is again a Bott manifold by~\cite[Corollary~3.5]{MP}. 
	Accordingly, any smooth projective toric variety whose integral cohomology ring is isomorphic to that of a Bott manifold is isomorphic to a Bott manifold. 
%
\end{remark}

\subsection{Cohomology rings}
\label{ssecCohomologyRings}

By the Borel--Hirzebruch formula~\cite{BH58}, the integral cohomology ring of a Bott manifold $\B(A)$ for $A\in \mathcal{M}_{n}$ is described by
	\begin{equation}\label{eq_coh_bott}
		\begin{split}
			H^\ast(\B(A); \Z)&\cong \Z[x_1,\dots,x_n]/\langle x_i^2 + c_1(\xi_i)x_i\mid i=1,\dots,n\rangle\\
			&\cong \Z[x_1,\dots,x_n]/\langle x_i^2-(a_{i,1}x_1+\dots+a_{i,i-1}x_{i-1})x_i \mid i=1,\dots,n\rangle \\
			& \cong \Z[x_1,\dots,x_n]/\langle x_i^2-\alpha_ix_i\mid i=1,2,\dots,n \rangle.
		\end{split}
	\end{equation}
	Here, $x_i$ is the first Chern class of the dual of $\gamma_{n,i}$, and we set
\[
	\alpha_i \coloneqq a_{i,1}x_1+\dots+a_{i,i-1}x_{i-1} \in H^2(\B(A);\Z).
\]
Note that $x_1,\dots,x_{n}$ are of degree two and they generate the cohomology ring $H^{\ast}(\B(A); \Z)$.

For a $\Q$-coefficient, we often use the following notation 
\begin{equation}\label{eq_y}
	y_i \coloneqq x_i- \frac{1}{2}\alpha_i \in H^2(\B(A);\Q).
\end{equation}
Note that $y_i$'s may not be integral classes but they generate $H^*(\B(A); \Q)$ as a ring.

Recall from~\cite[Theorem~3.12]{Oda} that the total Chern class of a Bott manifold $\B(A)$ is written by
\[
	c(\B(A))=\prod_{i=1}^n(1+x_i)(1+x_i-\alpha_i)=\prod_{i=1}^n(1+2x_i-\alpha_i).
\]
Substituting~\eqref{eq_y} in the above, we get
\begin{equation} \label{eq_total_Chern}
	c(\B(A))=\prod_{i=1}^n(1+2y_i)
\end{equation}
which implies that 
\begin{equation} \label{eq_first_Chern}
c_1(\B(A))=2\sum_{i=1}^n{y_i}\quad\text{ and} \quad c_n(\B(A))=2^n\prod_{i=1}^n y_i.
\end{equation}

\begin{remark}\label{rem_linearly_independent}
Note that the cohomology ring description in~\eqref{eq_coh_bott} can be obtained from the Danilov--Jurkiewicz theorem, see~\cite[Theorem~5.3.1]{BP_toric}.
It follows that
 the even Betti number $b_{2i}(\B(A))$ is ${n \choose i}$ for $1 \leq i \leq n$. Since $H^*(\B(A);\Z)$ is generated by degree two elements $\{x_i \mid 1 \leq i \leq n\}$, we obtain the following.
	\begin{enumerate}
		\item Since $x_1,\dots,x_n$ are linearly independent (over $\Z$), so are $y_1, \dots, y_n$ (over $\Q$).
		\item The set $\{ x_{i_1} \cdots x_{i_k} \mid 1 \leq i_1 < \cdots < i_k \leq n \}$ is a $\Z$-basis of $H^{2k}(\B(A); \Z)$.  
		\item $\prod_{i=1}^ny_i$ is the orientation class of $\B(A)$. Indeed, since the Euler characteristic 
		 of $\B(A)$ is $2^n$ and it agrees with $c_n(\B(A))$ evaluated on the fundamental class of $\B(A)$, $\prod_{i=1}^n y_i$ evaluated on the fundamental class is $1$. 
	\end{enumerate}

\end{remark}

In terms of $y_i$'s, we can obtain a simple description of (any) isomorphisms of $H^*(\B(A); \Q)$ as follows.
In the following, we denote $x_{i}$, $y_{i}$ and $\alpha_{i}$ in $H^{\ast}(\B(A))$ by $x_{i}^{A}$, $y_{i}^{A}$ and $\alpha_{i}^{A}$, respectively. 

\begin{proposition}\cite[Proposition 4.1]{CMM}\label{prop_isomorphism_y}
For $A$ and $A'$ in $\mathcal{M}_{n}$, if we have a graded ring isomorphism
\[
	\varphi\colon H^*(\B(A);\Q)\to H^*(\B(A');\Q),
\]
then there are nonzero $q_1,\dots,q_n\in \Q$ and a permutation $\sigma$ on $[n]$ such that 
\[
\varphi(y_i^A)=q_iy_{\sigma(i)}^{A'} \qquad \text{for $i=1,\dots,n$}.
\]
\end{proposition}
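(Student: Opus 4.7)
\emph{Reduction.} Set $z_i \coloneqq \varphi(y_i^A) \in H^2(\B(A');\Q)$. Since $\varphi$ is a graded ring isomorphism, $\{z_1,\dots,z_n\}$ is a $\Q$-basis of $H^2(\B(A');\Q)$, and the relations $(y_i^A)^2 \in \Q[y_1^A,\dots,y_{i-1}^A]$ implicit in~\eqref{eq_coh_bott} transport to $z_i^2 \in \Q[z_1,\dots,z_{i-1}]$ for each $i$. The proposition therefore reduces to the following purely internal claim about $H^*(\B(A');\Q)$: every $\Q$-basis $\{z_1,\dots,z_n\}$ of $H^2(\B(A');\Q)$ satisfying $z_i^2 \in \Q[z_1,\dots,z_{i-1}]$ has the form $z_i = q_i\, y_{\sigma(i)}^{A'}$ for some permutation $\sigma$ of $[n]$ and nonzero $q_i \in \Q$.

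\emph{Inductive construction of $\sigma$.} Using the $\Q$-basis $\{y_p^{A'} y_q^{A'} : p<q\}$ of $H^4(\B(A');\Q)$ (of the correct dimension $\binom{n}{2}$ by Remark~\ref{rem_linearly_independent}), I write $(y_m^{A'})^2 = \sum_{p<q<m} c^{(m)}_{p,q}\, y_p^{A'} y_q^{A'}$; since $(y_m^{A'})^2 = \tfrac14(\alpha_m^{A'})^2 \in \Q[y_1^{A'},\dots,y_{m-1}^{A'}]$, the structure constants $c^{(m)}_{p,q}$ vanish unless $p<q<m$. Then for $v = \sum_k b_k y_k^{A'}$ the coefficient of $y_k^{A'} y_l^{A'}$ (with $k<l$) in $v^2$ equals $2 b_k b_l + \sum_{m>l} b_m^2\, c^{(m)}_{k,l}$. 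I build $\sigma$ by induction on $i$. For $i=1$, $\alpha_1^A = 0$ forces $z_1^2 = 0$; with $m^\ast \coloneqq \max\{k : b_k \neq 0\}$ in $z_1 = \sum_k b_k y_k^{A'}$, the coefficient of $y_k^{A'} y_{m^\ast}^{A'}$ for $k<m^\ast$ collapses to $2 b_k b_{m^\ast}$ (the correction sum is empty by maximality), so $b_k = 0$ for $k \neq m^\ast$, and $(y_{m^\ast}^{A'})^2 = 0$; put $\sigma(1) \coloneqq m^\ast$. For the inductive step, suppose $z_k = q_k y_{\sigma(k)}^{A'}$ for $k<i$ and set $J \coloneqq \sigma(\{1,\dots,i-1\})$, so $\Q[z_1,\dots,z_{i-1}] = \Q[y_j^{A'} : j \in J]$. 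Writing $z_i = \sum_k b_k y_k^{A'}$, linear independence of $\{z_1,\dots,z_i\}$ guarantees that some $b_k \neq 0$ with $k \notin J$; let $m^\ast \coloneqq \max\{k \notin J : b_k \neq 0\}$. The hypothesis is that the coefficient of $y_k^{A'} y_l^{A'}$ in $z_i^2$ vanishes whenever $\{k,l\} \not\subset J$. I exploit this in two cascading cleanups. First, by downward induction on $m \in J$ with $m > m^\ast$: applied to the pair $(m^\ast, m)$, the correction sum $\sum_{m''>m} b_{m''}^2\, c^{(m'')}_{m^\ast,m}$ vanishes term by term (for $m'' \notin J$ by maximality of $m^\ast$, and for $m'' \in J$ with $m''>m$ by the running hypothesis of the downward induction), leaving $2 b_{m^\ast} b_m = 0$ and hence $b_m = 0$. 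Second, for every $k < m^\ast$ (whether in $J$ or not): applied to the pair $(k, m^\ast)$, the correction sum $\sum_{m>m^\ast} b_m^2\, c^{(m)}_{k,m^\ast}$ now vanishes by the first cleanup and by maximality, leaving $2 b_k b_{m^\ast} = 0$ and hence $b_k = 0$. Therefore $z_i = b_{m^\ast} y_{m^\ast}^{A'}$, and setting $\sigma(i) \coloneqq m^\ast \notin J$ extends the injection; at $i = n$ it becomes a permutation.

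\emph{Main obstacle.} The subtle point is the first cascading cleanup, the backward induction on $m \in J$ above $m^\ast$: without first establishing $b_m = 0$ there, the cleanup equations for pairs $(k, m^\ast)$ in the second step do not collapse, so one cannot immediately force $b_k = 0$ for $k < m^\ast$. The cascade terminates only because of the triangularity $c^{(m)}_{p,q} = 0$ unless $p<q<m$, which is inherent to the iterative Bott-tower construction ($\alpha_m^{A'}$ lies in $\Q[y_1^{A'},\dots,y_{m-1}^{A'}]$). Once this structural vanishing is in place, the induction runs mechanically and the result follows.
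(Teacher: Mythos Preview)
The paper does not supply a proof here; the result is quoted from \cite[Proposition~4.1]{CMM}. Your argument is sound in outline, but one step is asserted without justification and does not follow from what you have written. You claim that the coefficient of $y_k^{A'} y_l^{A'}$ in $z_i^2$ vanishes whenever $\{k,l\}\not\subset J$, as a consequence of $z_i^2\in\Q[z_1,\dots,z_{i-1}]=\Q[y_j^{A'}:j\in J]$. This is not immediate: the degree-$4$ part of $\Q[y_j^{A'}:j\in J]$ is spanned by products $y_p^{A'}y_q^{A'}$ with $p,q\in J$, but when $p=q$ one must rewrite $(y_p^{A'})^2=\sum_{r<s<p}c^{(p)}_{r,s}\,y_r^{A'}y_s^{A'}$ using the $A'$-triangularity, and there is no reason a priori for the indices $r,s$ appearing here to lie in $J$. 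So elements of $\Q[y_j^{A'}:j\in J]$ can, on the face of it, have nonzero coefficients on basis monomials $y_r^{A'}y_s^{A'}$ with $r$ or $s$ outside $J$, and your cleanup equations would then fail to collapse.

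The fix is to use more of the inductive hypothesis. From $z_k=q_k\,y_{\sigma(k)}^{A'}$ together with the transported $A$-side relation $z_k^2\in\Q[z_1,\dots,z_{k-1}]$ you obtain $(y_{\sigma(k)}^{A'})^2\in\Q[y_{\sigma(l)}^{A'}:l<k]$ for each $k<i$. A short secondary induction on $k$ (the base case being $(y_{\sigma(1)}^{A'})^2=0$, which you already recorded) then shows $(y_p^{A'})^2\in\mathrm{span}_{\Q}\{y_r^{A'}y_s^{A'}:r<s,\ r,s\in J\}$ for every $p\in J$, and hence the degree-$4$ part of $\Q[y_j^{A'}:j\in J]$ really is contained in that span. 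With this auxiliary lemma in place your coefficient-vanishing claim holds, and the two cascading cleanups go through exactly as written.
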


\begin{theorem}\label{thm_q}
Suppose that there is a $c_1$-preserving graded cohomology ring isomorphism between two Bott manifolds. Then all $q_i$'s in Proposition \ref{prop_isomorphism_y} are equal to $1$. 
Moreover, it preserves their total Chern classes and hence 
all the Chern numbers of the two Bott manifolds are the same. 
\end{theorem}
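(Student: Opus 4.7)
The plan is to read off $q_i=1$ directly from the $c_1$-preserving hypothesis by comparing coefficients in the $\Q$-linearly independent set $\{y_i^{A'}\}$, and then to deduce preservation of the total Chern class and all Chern numbers from the product formula~\eqref{eq_total_Chern}. Since $\B(A)$ and $\B(A')$ have torsion-free cohomology, the given integral ring isomorphism extends uniquely to a $\Q$-coefficient isomorphism, so Proposition~\ref{prop_isomorphism_y} applies and furnishes nonzero rationals $q_1,\dots,q_n$ and a permutation $\sigma$ with $\varphi(y_i^A)=q_i y_{\sigma(i)}^{A'}$.

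First I would apply $\varphi$ to the identity $c_1(\B(A))=2\sum_{i=1}^n y_i^A$ from~\eqref{eq_first_Chern}. This produces
\[
\varphi(c_1(\B(A)))=2\sum_{i=1}^n q_i\, y_{\sigma(i)}^{A'},
\]
which the $c_1$-preserving hypothesis identifies with $c_1(\B(A'))=2\sum_{j=1}^n y_j^{A'}=2\sum_{i=1}^n y_{\sigma(i)}^{A'}$. By Remark~\ref{rem_linearly_independent}(1) the classes $y_1^{A'},\dots,y_n^{A'}$ are $\Q$-linearly independent, so comparing coefficients of $y_{\sigma(i)}^{A'}$ immediately forces $q_i=1$ for every $i$.

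With $q_i=1$ in hand, the total Chern class formula~\eqref{eq_total_Chern} gives
\[
\varphi(c(\B(A)))=\prod_{i=1}^n\bigl(1+2\varphi(y_i^A)\bigr)=\prod_{i=1}^n\bigl(1+2y_{\sigma(i)}^{A'}\bigr)=c(\B(A')),
\]
and by separating into degrees $\varphi(c_k(\B(A)))=c_k(\B(A'))$ for every $k$. To transfer this equality of Chern classes to equality of Chern numbers, I would invoke Remark~\ref{rem_linearly_independent}(3): the monomial $\prod_{i=1}^n y_i^A$ is the top-degree class of $\B(A)$ that pairs to $1$ with the fundamental class, and
\[
\varphi\!\left(\prod_{i=1}^n y_i^A\right)=\prod_{i=1}^n y_{\sigma(i)}^{A'}=\prod_{j=1}^n y_j^{A'}
\]
is the analogous class for $\B(A')$. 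Hence $\varphi$ intertwines the degree-$2n$ augmentation maps, and combined with preservation of each $c_k$ we conclude that every Chern number of $\B(A)$ coincides with the corresponding Chern number of $\B(A')$.

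I do not anticipate a substantive obstacle: once Proposition~\ref{prop_isomorphism_y} and the formulas~\eqref{eq_total_Chern}, \eqref{eq_first_Chern} are taken as input, the argument is a two-line coefficient comparison followed by a direct substitution. The only mildly delicate point is the passage from $\Z$- to $\Q$-coefficients needed to invoke Proposition~\ref{prop_isomorphism_y}, which is harmless because the integral cohomology of a Bott manifold is torsion-free.
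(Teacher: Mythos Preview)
Your proof is correct and follows essentially the same approach as the paper's own proof: compare coefficients in the linearly independent set $\{y_i^{A'}\}$ using the $c_1$-formula~\eqref{eq_first_Chern} to force $q_i=1$, then invoke~\eqref{eq_total_Chern} for the total Chern class and the orientation class $\prod y_i$ for the Chern numbers. Your write-up is in fact slightly more detailed in justifying the passage to $\Q$-coefficients and the preservation of the fundamental-class pairing, but the argument is the same.
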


\begin{proof}
	Let $\B$ and $\B'$ be Bott manifolds determined by Bott matrices $A$ and $A'$, respectively.
	Let $\varphi$ be the $c_1$-preserving graded cohomology ring isomorphism between  $\B$ and $\B'$. 
	By \eqref{eq_first_Chern}, we have
	\[
		\varphi\left(2\sum_{i=1}^n y_i^A \right) =2\sum_{i=1}^n y_i^{A'}.
	\]
	On the other hand, it follows from Proposition~\ref{prop_isomorphism_y} that
	\[
		\varphi\left(2\sum_{i=1}^n y_i^A \right)=2\sum_{i=1}^n q_iy_{\sigma(i)}^{A'}.
	\]
	Comparing these two identities, we obtain 
	\[
		2\sum_{i=1}^n y_i^{A'}=2\sum_{i=1}^n q_iy_{\sigma(i)}^{A'}.
	\]
	Here, $y_1^{A'}, \dots,y_n^{A'}$ are linearly independent, so we conclude $q_i=1$ for any $i$. This together with~\eqref{eq_total_Chern} shows that $\varphi$ preserves their total Chern classes,
	 proving the former part of the theorem. 
	 
	 The latter part of the theorem follows from the former part and the fact that $\varphi$ preserves the orientation classes $\prod_{i=1}^n y_i^A$ and $\prod_{i=1}^ny_i^{A'}$
	 (as well as top Chern classes). 
\end{proof}

\begin{remark} We note the following.
\begin{enumerate}
	\item Not every graded cohomology ring isomorphism between Bott manifolds is $c_1$-preserving.  For instance, one can find such an isomorphism for Hirzebruch surfaces 
	$\mcal{H}_0$ and $\mcal{H}_2$.  See Example~\ref{ex-hir}.
	\item Recall that two Hirzebruch surfaces $\mcal{H}_a$ and $\mcal{H}_b$ are isomorphic if and only $|a|=|b|$. However, for any integers $a$ and $b$ with the same parity, there is a $c_1$-preserving graded cohomology ring isomorphism between Hirzebruch surfaces $\mcal{H}_a$ and $\mcal{H}_b$.  
	Therefore, the existence of such a cohomology ring isomorphism does not imply that two varieties are isomorphic.
	We would need to restrict our attention to Fano Bott manifolds to conclude a variety isomorphism. See Remark ~\ref{rem_weak_Fano}.
\end{enumerate}
\end{remark}

\begin{example}\label{ex-hir}
It follows from~\eqref{eq_coh_bott} that
\begin{equation*}
H^{\ast}(\mathcal{H}_{0};\Z)=\Z[x_{1},x_{2}]/\langle x_{1}^{2},x_{2}^{2}\rangle\qquad\text{ and }\qquad H^{\ast}(\mathcal{H}_{2};\Z)=\Z[x_{1}',x_{2}']/\langle (x_{1}')^{2},{x_{2}'}(x_{2}'-2x_{1}')\rangle.
\end{equation*} Note that $c_{1}(\mathcal{H}_{0})=2x_{1}+2x_{2}$ and $c_{1}(\mathcal{H}_{2})=2x_{2}'$.
Then the map $\varphi\colon H^{\ast}(\mathcal{H}_{0})\to H^{\ast}(\mathcal{H}_{2})$ given by $\varphi(x_{1})=x_{1}'$ and $\varphi(x_{2})=x_{1}'-x_{2}'$ is a graded ring isomorphism which does not preserve the first Chern class. On the other hand, the map $\varphi'$ defined by $\varphi'(x_1):=x_1'$ and $\varphi'(x_2):=x_2'-x_1'$ is a $c_1$-preserving isomorphism from $H^\ast(\mathcal{H}_0)$ to $H^\ast(\mathcal{H}_2)$. (Note that $\varphi'(c_1(\mathcal{H}_0))=\varphi'(2x_1+2x_2)=2x_2'=c_1(\mathcal{H}_2).$)
\end{example}

\subsection{Fano Bott manifolds}
\label{ssecFanoBottManifolds}

In this subsection, we recall  a description of Fano Bott manifolds from~\cite{Su}.

\begin{theorem}\cite[Theorem 8]{Su}\label{thm_Fano}
	A Bott manifold $\B(A)$ is Fano if and only if each column of $A+E$ has values in $\{-1,0,1\}$ and it satisfies	
	 one of the following:
	\begin{enumerate}
		\item all entries are zero,
		\item there is at most one $1$ and every other entry below the $1$ vanishes \textup{(}if there is $1$ on the column\textup{)},
		\item if there is $-1$ at the $i$-th row, then the entries below the $-1$ coincide with the entries on the $i$-th
		column below the diagonal $a_{ii} = -1$.
	\end{enumerate}
\end{theorem}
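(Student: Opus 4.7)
The plan is to invoke the Batyrev-type criterion: a smooth projective toric variety is Fano if and only if, for every primitive collection $\mathcal{P}$ with primitive relation $\sum_{\rho \in \mathcal{P}} v_\rho = \sum_i c_i v_{\sigma_i}$ (the right-hand side being the unique expression of the left-hand side as a nonnegative integer combination of rays spanning a common cone of the fan), one has $|\mathcal{P}| - \sum_i c_i > 0$. Because the fan $\Sigma_A$ of $\B(A)$ has the combinatorial type of the normal fan of the $n$-cube, the minimal non-faces are precisely the $n$ pairs $\{{\bf e}_j,{\bf v}_j\}$ for $j \in [n]$, and each carries the primitive relation
\[
{\bf e}_j + {\bf v}_j = \sum_{k > j} a_{k,j}\,{\bf e}_k, \qquad j \in [n]. \quad (\dagger_j)
\]

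First, I would reduce each $(\dagger_j)$ to canonical form working from $j=n$ downward. For $j=n$ the relation is ${\bf e}_n+{\bf v}_n=0$, which automatically satisfies Fano-ness. For $j<n$, whenever ${\bf e}_k$ on the right-hand side carries a negative coefficient, I would substitute ${\bf e}_k=-{\bf v}_k+\sum_{l>k}a_{l,k}\,{\bf e}_l$ obtained from $(\dagger_k)$; each substitution replaces an index $k$ by terms of strictly larger index, so the cascade terminates. The resulting expression must realize the right-hand side as a nonnegative combination of rays supported in a single maximal cone, which immediately restricts the entries of $A+E$ to $\{-1,0,1\}$. A case analysis on the signs and positions of the nonzero entries of column $j$ then recovers the three admissible types (1), (2), and (3) in the statement.

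I expect the main obstacle to be case (3). When the first nonzero entry encountered in column $j$ is $a_{i,j}=-1$, the substitution from $(\dagger_i)$ injects the entire below-diagonal tail of column $i$ into column $j$; for the resulting expression to remain a legitimate nonnegative combination within a single cone with coefficient sum strictly less than $2$, the portion of column $j$ below row $i$ is forced to coincide with the corresponding portion of column $i$ below its diagonal entry. Pinning down this matching and verifying that (1)--(3) exhaust all Fano possibilities will proceed by induction on $n-j$, leveraging the Fano conditions already established for columns $j+1,\ldots,n$ so that the cascades triggered by $(\dagger_j)$ can be controlled by the structure of the higher-indexed columns.
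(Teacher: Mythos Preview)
The paper does not supply a proof of this statement; it is quoted from Suyama \cite[Theorem~8]{Su} and used as a black box throughout Section~\ref{secMainTheorem}. There is therefore no proof in the paper to compare your proposal against.

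For what it is worth, your outline via Batyrev's primitive-collection criterion is the standard route to such Fano characterizations and is essentially the method behind Suyama's result: the primitive collections of $\Sigma_A$ are indeed exactly the $n$ pairs $\{\mathbf{e}_j,\mathbf{v}_j\}$, and the cascade reduction of $(\dagger_j)$ you describe is the right mechanism for putting each primitive relation into canonical form. The one place your sketch is loose is the bookkeeping in case~(3): when $a_{i,j}=-1$ triggers the substitution $-\mathbf{e}_i = \mathbf{v}_i - \sum_{l>i} a_{l,i}\mathbf{e}_l$, one must verify that the resulting expression has all its rays in a single cone (so $\mathbf{v}_i$ cannot later be replaced back), that no further negative coefficients appear after combining with the remaining $a_{k,j}\mathbf{e}_k$ terms, and that the total coefficient sum stays strictly below~$2$. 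Your proposed downward induction on $j$, using the already-established structure of the higher-indexed columns, is the correct scaffolding for this, but the argument that the cascade terminates in a single step with the column-matching condition (rather than propagating further) needs to be made explicit.
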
	

\begin{example}
	Consider the following Bott matrices.
	\[
	A_1 = \begin{bmatrix}
	-1 & 0 & 0 \\ 
	1 & -1 & 0 \\
	2 & 1 & -1
	\end{bmatrix}\!,\quad
	A_2 = \begin{bmatrix}
	-1 & 0 & 0 \\
	1 & -1 & 0 \\
	1 & 0 & -1
	\end{bmatrix}\!,\quad
	A_3 = \begin{bmatrix}
	-1 & 0 & 0 \\
	-1 & -1 & 0 \\
	0 & 1	& -1
	\end{bmatrix}\!,\quad
	A_4 = \begin{bmatrix}
	-1 & 0 & 0 \\
	-1 & -1 & 0 \\
	1 & 1 & -1
	\end{bmatrix}\!.
	\]
	One can easily check that only $A_4$ satisfies all the conditions in Theorem \ref{thm_Fano} and so 
	$\B(A_4)$ is the only Fano Bott manifold among $\B(A_k)$'s for $1 \leq k \leq 4$. 
	Indeed, $A_4$ is $(10)$ on the list of Fano threefolds in the book of Oda~\cite[Figure~2.6]{Oda} and $(12)$ on the list of `Smooth toric Fano varieties'~\cite{Obro07} in the Graded Ring Database~\cite{GRDB}.
	
\end{example}
		
\section{Operations on Bott matrices}
\label{secOperationsOnBottMatrices}

For two Bott matrices $A$ and $A'$ in $\mathcal{M}_{n}$, we say that $A$ and $A'$ are {\em isomorphic} if $\B(A)$ and $\B(A')$ are isomorphic as toric varieties.
Equivalently, $A$ and $A'$ are isomorphic if the corresponding fans $\Sigma_A$ and $\Sigma_{A'}$ are unimodularly equivalent, i.e., there is a $\Z$-linear map in $\mathrm{GL}(n,\Z)$ which sends a maximal cone in $\Sigma_A$ to a maximal cone in $\Sigma_{A'}$. 
For a given Bott matrix $A$, there are two natural ways of producing (possibly new) isomorphic Bott matrices as follows.

For $A\in\mathcal{M}_{n}$ and $I \subset [n]$, we consider the $n \times n$ lower triangular matrix $L_I$ whose $j$-th column~${\bf c}_j$ is defined by
\[
	{\bf c}_j \coloneqq 
	\begin{cases} 
		{\bf e}_j & \text{if $j \in I$}, \\
		{\bf v}_j & \text{if $j \in I^c$},
	\end{cases}
\] where ${\bf v}_{j}$ is the $j$-th column of $A$.
\begin{proposition}\label{prop_op1}
	For $A \in \mathcal{M}_{n}$ and $I \subset [n]$, the matrix 
	\[
		A_I \coloneqq L_I^{-1} \cdot L_{I^c}
	\]
	is also a Bott matrix. Moreover, $A$ and $A_I$ are  isomorphic.
	We denote the operation $A \mapsto A_I$ by {\bf ``Op.1''}.	
\end{proposition}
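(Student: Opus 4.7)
The plan is to use $L_I^{-1}$ itself, viewed as an element of $\mathrm{GL}(n,\Z)$, as the unimodular map that carries $\Sigma_A$ onto $\Sigma_{A_I}$. Since every column of $A$ vanishes above its diagonal entry $-1$, both ${\bf e}_j$ and ${\bf v}_j$ vanish above row $j$, so $L_I$ is lower triangular; its $j$-th diagonal entry is $+1$ for $j\in I$ and $-1$ for $j\in I^c$, so $L_I\in\mathrm{GL}(n,\Z)$. The matrix $A_I=L_I^{-1}L_{I^c}$ is then automatically lower triangular, and a one-line computation of the diagonal entries gives $1\cdot(-1)=-1$ when $j\in I$ and $(-1)\cdot 1=-1$ when $j\in I^c$; hence $A_I\in\mathcal{M}_n$ is a Bott matrix.

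To verify $\Sigma_A$ and $\Sigma_{A_I}$ are unimodularly equivalent, I would tabulate how $L_I^{-1}$ acts on the $2n$ rays of $\Sigma_A$. Writing ${\bf v}_j'$ for the $j$-th column of $A_I$, the identity $L_I^{-1}L_I=E$ (read column by column) together with the defining formula $A_I=L_I^{-1}L_{I^c}$ immediately yield
\begin{equation*}
	L_I^{-1}{\bf e}_j=\begin{cases}{\bf e}_j,&j\in I,\\ {\bf v}_j',&j\in I^c,\end{cases}\qquad L_I^{-1}{\bf v}_j=\begin{cases}{\bf v}_j',&j\in I,\\ {\bf e}_j,&j\in I^c.\end{cases}
\end{equation*}
Feeding this into the description~\eqref{eq_fan} of a maximal cone $\sigma_J\in\Sigma_A$ and splitting on whether $j\in J$ and whether $j\in I$, one finds
\begin{equation*}
	L_I^{-1}(\sigma_J)=\sigma_{J\triangle I^c}^{(A_I)},
\end{equation*}
where $\sigma_K^{(A_I)}$ denotes the maximal cone of $\Sigma_{A_I}$ indexed by $K\subset[n]$ and $\triangle$ is symmetric difference. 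Since $J\mapsto J\triangle I^c$ is its own inverse, and hence a bijection on subsets of $[n]$, the map $L_I^{-1}$ sends the $2^n$ maximal cones of $\Sigma_A$ bijectively onto those of $\Sigma_{A_I}$. Therefore $\Sigma_A\cong\Sigma_{A_I}$ as fans and $\B(A)\cong\B(A_I)$ as toric varieties.

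The only real obstacle is the bookkeeping in the four-case split and keeping the roles of $I$ and $I^c$ straight; the definition $A_I=L_I^{-1}L_{I^c}$ is slightly asymmetric, which is exactly what makes the two halves of the case table combine into a clean formula. Once the action of $L_I^{-1}$ on the generators is tabulated as above, the fan-level conclusion is an immediate consequence of~\eqref{eq_fan}.
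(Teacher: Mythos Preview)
Your proof is correct and follows the same approach as the paper: both use $L_I^{-1}\in\mathrm{GL}(n,\Z)$ as the unimodular map carrying $\Sigma_A$ to $\Sigma_{A_I}$, after checking that $A_I$ is lower triangular with $-1$'s on the diagonal. Your version is in fact more explicit than the paper's, which simply asserts that $L_I^{-1}$ sends $\sigma_J$ to a maximal cone of $\Sigma_{A_I}$ (noting only that $\sigma_I$ goes to the first quadrant), whereas you compute the full correspondence $\sigma_J\mapsto\sigma_{J\triangle I^c}^{(A_I)}$ via the four-case table on ray generators.
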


\begin{proof}
	Observe that $A_I$ is a lower triangular integer matrix and 
	\[
		(A_I)_{ii} = \sum_{j=1}^n (L_I^{-1})_{ij} (L_{I^c})_{ji} = (L_I^{-1})_{ii} (L_{I^c})_{ii} = -1
	\]
	since $(L_I^{-1})_{ij} = 0$ for $j > i$ and $(L_{I^c})_{ji} = 0$ for $i > j$. Thus the first claim easily follows. 
	
	For the latter statement, consider a $\Z$-linear map given by $L_{I}^{-1} \in \mathrm{GL}(n,\Z)$. Then it induces
 a map 
	\[
		 \Sigma_A \rightarrow \Sigma_{A_I} 
	\]
	which sends each maximal cone $\sigma_J \in \Sigma_A$ to $L_I^{-1} \sigma_J \in \Sigma_{A_I}$ for each $J \subset [n]$ (in particular $\sigma_I$ to the first quadrant).
	Therefore, two fans $\Sigma_A$ and $\Sigma_{A_I}$ are unimodularly equivalent.
\end{proof}

\begin{remark}\label{rmk_op1_preserves_Fano}
The column vectors of $L_{I}$ are the ray generators of the maximal cone $\sigma_I$ in~\eqref{eq_fan}. Therefore, the operation {\oo} is nothing but a procedure of selecting a {\em reference cone} $\sigma_I$ and sending it to the {\em first quadrant} by $L_I^{-1} \in \mathrm{GL}(n,\Z)$.
Accordingly, if we take a Bott matrix $A$ which defines a \textit{Fano} Bott manifold, all the matrices $A_I$ obtained by the operation {\oo} define \textit{Fano} Bott manifolds because \textit{Fano} is an intrinsic property of a toric variety. 
\end{remark}

Depending on $A$, it could happen that a reordering (by some permutation $\pi \in S_n$) of the standard basis $\{ {\bf e}_1, \dots, {\bf e}_n \}$ changes $A$ 
into another Bott matrix (denoted by $A_\pi$) by rearranging its column vectors. Equivalently, $A_\pi = P_\pi A P_\pi^{-1}$, where $P_{\pi}$ is the row permutation matrix of $\pi$, that is, $P_{\pi}$ has~$1$ on $(i, \pi(i))$-entry for $i=1,\dots,n$ and all the other entries are zero.
We call the operation $A \mapsto A_\pi$ {\bf ``Op.2''} when $A_\pi$ is still a Bott matrix.
It is straightforward that $A$ and $A_\pi$ are isomorphic since $\Sigma_A$ and $\Sigma_{A_\pi}$ are the same up to reordering
coordinates.

\begin{example}
	Consider $A = \begin{bmatrix} -1 & 0 & 0 \\ 1 & -1 & 0 \\ 0 & 0 & -1 \end{bmatrix}$. 
	\begin{enumerate}
		\item {\em \bf (Op.1)}  For $I = \{1\} \subset [3]$, we have 
			\[
				L_I = L_I^{-1} = \begin{bmatrix} 1 & 0 & 0 \\ 0 & -1 & 0 \\ 0 & 0 & -1 \end{bmatrix}, \quad 
				L_{I^c} = \begin{bmatrix} -1 & 0 & 0 \\ 1 & 1 & 0 \\ 0 & 0 & 1 \end{bmatrix}, \quad
				A_I = \begin{bmatrix} -1 & 0 & 0 \\ -1 & -1 & 0 \\ 0 & 0 & -1 \end{bmatrix}. 
			\]

		\item {\em \bf (Op.2)} For $\pi = (2,3)$, we have 
		\[
			A_{(2,3)} = \begin{bmatrix} -1 & 0 & 0 \\ 0& -1 & 0 \\ 1 & 0 & -1 \end{bmatrix}.
		\]
	\end{enumerate}
\end{example}

The following proposition tells us that for a given Bott matrix $A$,  all Bott matrices isomorphic to $A$ are produced by applying {\oo} and {\ooo} to $A$.

\begin{proposition}\label{prop-op}
	Two Bott matrices  are isomorphic if and only if one is obtained from the other by applying two operations {\oo} and {\ooo}.
\end{proposition}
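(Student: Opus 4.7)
The ``if'' direction is already in hand: Proposition~\ref{prop_op1} shows that {\oo} produces an isomorphic Bott matrix, and {\ooo} clearly preserves the fan of $\B(A)$ up to a reordering of the standard basis. For the converse, the plan is to first apply {\oo} to $A'$ in order to ``normalize'' a unimodular equivalence $\phi \colon \Sigma_A \to \Sigma_{A'}$ into one that fixes the first quadrant, and then to recognize this residual transformation as a permutation of the basis, i.e., as {\ooo}.

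More precisely, given $\phi \in \mathrm{GL}(n,\Z)$ realizing a bijection of fans $\Sigma_A \to \Sigma_{A'}$, let $I \subset [n]$ be the unique subset with $\phi(\sigma_{[n]}) = \sigma_I$. Applying {\oo} to $A'$ with this $I$ produces $A'_I$, and by Remark~\ref{rmk_op1_preserves_Fano} the matrix $L_I^{-1}$ sends $\sigma_I \in \Sigma_{A'}$ to the first-quadrant cone in $\Sigma_{A'_I}$. Thus the composition $\psi \coloneqq L_I^{-1} \circ \phi \in \mathrm{GL}(n,\Z)$ takes the first quadrant to itself. Any such element must permute the primitive ray generators $\mathbf{e}_1,\ldots,\mathbf{e}_n$ of the first quadrant, so $\psi$ equals a permutation matrix $P_\pi$ for some $\pi \in S_n$.

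The crux of the proof is then to upgrade this to the matrix identity $A'_I = A_\pi$, which in particular certifies that {\ooo} is applicable to $A$ with the permutation $\pi$. For this I would use the combinatorial observation that in $\Sigma_A$ the pair $\{\mathbf{e}_i, \mathbf{v}_i\}$ is intrinsically characterized as the unique pair of distinct rays not jointly contained in any maximal cone of~\eqref{eq_fan}, and likewise for $\{\mathbf{e}_j, \mathbf{v}'_j\}$ in $\Sigma_{A'_I}$ (where $\mathbf{v}'_j$ is the $j$-th column of $A'_I$). Since this ``pair structure'' is preserved by every fan isomorphism, the already-known action of $P_\pi$ on the $\mathbf{e}_i$'s determines its action on the $\mathbf{v}_i$'s: namely $P_\pi \mathbf{v}_i = \mathbf{v}'_j$ for the unique $j$ paired with $P_\pi \mathbf{e}_i$. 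Reading this identity column-by-column gives $P_\pi A = A'_I P_\pi$, i.e., $A_\pi = P_\pi A P_\pi^{-1} = A'_I$. The chain
\[
A' \;\xrightarrow{\text{{\oo} with } I}\; A'_I = A_\pi \;\xrightarrow{\text{{\ooo} with } \pi^{-1}}\; A
\]
then exhibits $A$ as obtained from $A'$ by successive application of {\oo} and {\ooo}, completing the proof.

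I expect the main obstacle to be this pair-matching step: one has to verify that the pairing $\{\mathbf{e}_i, \mathbf{v}_i\}$ really is an intrinsic combinatorial invariant of the fan (which should follow from the description of $\Sigma_A$ as combinatorially the normal fan of the $n$-cube, whose $2n$ rays come in $n$ antipodal pairs) and to track indices so that the image $P_\pi \mathbf{v}_i$ is identified with the correct column $\mathbf{v}'_j$ of $A'_I$. Once this column identification is in place, the rest is routine bookkeeping.
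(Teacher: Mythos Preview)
Your argument is correct and follows essentially the same route as the paper: normalize the unimodular equivalence by {\oo} so that the first quadrant is preserved, observe the resulting map is a permutation matrix, and conclude via {\ooo}. The paper's version is terser and does not spell out the pair-matching step you flag (it simply asserts that a permutation of the $\mathbf{e}_i$'s ``corresponds to {\ooo}''), so your extra care there is justified rather than a deviation.
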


\begin{proof}
	The ``if'' part is straightforward. Thus we only need to prove the ``only if'' part. 
	
	Suppose that $A$ and $B$ are isomorphic Bott matrices, i.e., there exists a $\Z$-linear map $C \in \mathrm{GL}(n,\Z)$ sending maximal cones in $\Sigma_A$ to maximal cones in $\Sigma_B$.
	The $\Z$-linear map $C$ sends $\sigma_{[n]}^A \in \Sigma_A$ to $\sigma_I^B \in \Sigma_B$  for some $I \subset [n]$. (We denote by $\sigma_I^A$ the maximal cone in $\Sigma_A$ given by $I \subset [n]$.)  
	Since $B$ and $B_I \coloneqq (L_I^B)^{-1} \cdot L_{I^c}^B$ 
	are isomorphic, we may think of $(L_I^B)^{-1} \cdot C$ as a $\Z$-linear map which sends $\sigma_{[n]}^A \in \Sigma_A$ to $\sigma_{[n]}^{B_I} \in \Sigma_{B_I}$. 
	In other words, $(L_I^B)^{-1} \cdot C$ is a reordering of the standard basis and hence it corresponds to {\ooo}. Therefore, $B_I$ is obtained from $A$ by {\ooo}.
	Accordingly, $B$ is given by $A$ applying {\ooo} and {\oo} in order and 
	this completes the proof.
\end{proof}

Note that the rational cohomology class $y_{i}^{A}$ given in \eqref{eq_y} depends on the Bott matrix $A$. In the rest of the section, we will show that 
if $A$ and $A'$ are isomorphic, then there is a graded ring isomorphism
\begin{equation}\label{eq-iso-bott}
\varphi \colon H^{\ast} (\B(A);\Z) \to H^{\ast} (\B(A');\Z)
\end{equation} 
sending the set $\{2y_{i}^{A}\}_{i=1}^{n}$ to $\{2y_{i}^{A'}\}_{i=1}^{n}$. This fact can be obtained as a byproduct of Propositions~\ref{prop-op},~\ref{prop_iso_op1}, and~\ref{prop_iso_op2}.

\begin{proposition}\label{prop_iso_op1}
	Let $A \in  \mathcal{M}_{n}$. 
	For $I \subset [n]$, there exists a $c_1$-preserving graded ring isomophism 
	\[
	\varphi_I\colon H^{\ast}(\B(A);\Z) \to H^{\ast}(\B(A_I);\Z)
	\]
	such that $\varphi_I(2y_i^A) = 2y_i^{A_I}$ for $i = 1,\dots,n$. 
\end{proposition}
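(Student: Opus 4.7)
The plan is to define $\varphi_I$ as the graded ring isomorphism induced by the toric isomorphism $f\colon\B(A)\to\B(A_I)$ of Proposition~\ref{prop_op1} coming from $L_I^{-1}\in\mathrm{GL}(n,\Z)$, and then to verify the stated properties at the level of torus-invariant divisor classes. The first step is to translate the classes $2y_i$ into divisor language. For a Bott matrix $B\in\mathcal{M}_n$, let $D_i^+(B)$ and $D_i^-(B)$ denote the torus-invariant prime divisors of $\B(B)$ corresponding to the rays spanned by the $i$-th standard basis vector and the $i$-th column of $B$, respectively. A direct application of the Danilov--Jurkiewicz presentation together with the linear relation associated to the dual basis vector $e_i^{\ast}$ identifies $x_i^B$ with $[D_i^-(B)]$ and $[D_i^+(B)]$ with $x_i^B-\alpha_i^B$, so by~\eqref{eq_y}
\[
2y_i^B=[D_i^-(B)]+[D_i^+(B)];
\]
that is, $2y_i^B$ is the sum of the classes of the two divisors coming from the $i$-th pair of opposite rays of $\Sigma_B$.

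The second step is to track how $L_I^{-1}$ acts on each such opposite pair in $\Sigma_A$. Since the $i$-th column of $L_I$ is $\mathbf{e}_i$ when $i\in I$ and $\mathbf{v}_i$ when $i\in I^c$, while the $i$-th column of $L_{I^c}$ is the other one, the identity $A_I=L_I^{-1}L_{I^c}$ forces $L_I^{-1}$ to send $\{\mathbf{e}_i,\mathbf{v}_i\}$ bijectively onto the pair consisting of the $i$-th standard basis vector and the $i$-th column of $A_I$, with a swap occurring exactly when $i\in I^c$. In particular, for every $i$ the unordered pair of opposite rays with index $i$ in $\Sigma_A$ is carried to the corresponding unordered pair in $\Sigma_{A_I}$, so $\varphi_I$ carries $\{[D_i^-(A)],[D_i^+(A)]\}$ to $\{[D_i^-(A_I)],[D_i^+(A_I)]\}$ as unordered sets.

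Summing inside each pair cancels the ambiguity from the possible swap and yields $\varphi_I(2y_i^A)=2y_i^{A_I}$ for every $i$. Summing over $i$ and applying~\eqref{eq_first_Chern} then gives $\varphi_I(c_1(\B(A)))=c_1(\B(A_I))$, which is the $c_1$-preserving property. I expect the only genuinely non-cosmetic step to be the ray bookkeeping in the second paragraph, i.e.\ verifying that $L_I^{-1}$ respects the ``opposite-ray'' pairing built into the cube-like combinatorics of $\Sigma_A$; this is a direct computation from the definition $A_I=L_I^{-1}L_{I^c}$, and everything else is formal.
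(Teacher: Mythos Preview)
Your argument is correct and takes a genuinely different route from the paper's. The paper proceeds by explicit computation: it first treats the special case $I=\{k\}^c$, works out the rows of $A_I+E$ in terms of those of $A+E$, defines $\varphi_I$ on generators by the formula~\eqref{eq_op1_isom}, checks well-definedness by hand against each relation $x_i^A(x_i^A-\alpha_i^A)$, and then composes using the factorization $L_I=L_{\{i_1\}^c}\cdots L_{\{i_m\}^c}$ to handle general $I$. Your approach instead pulls the isomorphism directly from the toric isomorphism of Proposition~\ref{prop_op1} (so well-definedness is automatic) and identifies $2y_i$ as the sum of the two opposite divisor classes, reducing the claim to the single observation that $L_I^{-1}$ preserves the unordered opposite-ray pairs indexed by $i$. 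This is cleaner and handles general $I$ in one stroke. One thing to be aware of is that the paper later uses the explicit formula~\eqref{eq_op1_isom} (specifically $\varphi'(x_k')=x_k''-\alpha_k''$) in the proof of Theorem~\ref{thm_main_body}; your divisor bookkeeping yields this too, since for $k\in I^c$ you showed $L_I^{-1}\mathbf{v}_k=\mathbf{e}_k$, whence $\varphi_I(x_k^A)=[D_k^+(A_I)]=x_k^{A_I}-\alpha_k^{A_I}$, but you should state this consequence explicitly if you intend to invoke it downstream. A minor notational point: the toric morphism $f\colon\B(A)\to\B(A_I)$ induces $f^*$ in the opposite direction, so $\varphi_I$ is really $(f^{-1})^*$; this is harmless since $f$ is an isomorphism, but worth saying.
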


\begin{proof}
	We first consider the case where $I = \{k\}^c = [n] \setminus \{k\}$ for some $k \in [n]$ so that $A_I = L_{ \{k\}^c}^{-1} L_{ \{k\}}$.
	Let $\mathbf{a}_i$ and $\mathbf{a}^I_i$ be the $i$-th row of $A+E$ and $A_I+E$, respectively. By direct computations, we obtain
	\begin{enumerate}
		\item $\mathbf{a}^I_i=\mathbf{a}_i$ for $i<k$;
		\item $\mathbf{a}^I_i= -\mathbf{a}_k$ for $i=k$; and
		\item $\mathbf{a}^I_i= \mathbf{a}_i + a_{i,k}\mathbf{a}_k$ for $i>k$.
	\end{enumerate}
	Then the Bott tower $\B^I_{\bullet}$ corresponding to $A_I$ is given by 
	\begin{equation*}
		\B^I_i=
			\begin{cases}
				\B_i & \text{ for }i<k,\\
				P(\underline{\C}\oplus \xi_k^{-1}) & \text{ for }i=k,\\
				P(\underline{\C}\oplus (\xi_i\otimes \xi_{k}^{a_{i,k}})) & \text{ for }i>k.
			\end{cases}
	\end{equation*} 
	Here, $\xi_1,\dots,\xi_n$ are the line bundles used to construct the Bott tower $\B_{\bullet} = \B(A)$.
	
	We define a map $\varphi_I$ by 
	\begin{equation}\label{eq_op1_isom}
		\varphi_I(x_i^{A})=\begin{cases}
			x^{A_{I}}_i & \text{ for }i\neq k,\\
			x^{A_{I}}_k+\sum_{j<k} a_{k,j}x^{A_{I}}_j & \text{ for } i=k.
			\end{cases}
	\end{equation}
	Then, by $(2)$, we obtain that
	\begin{equation}\label{eq_xkA_op1}
	\varphi_I(x_k^A) = x_k^{A_I} - \alpha_k^{A_I} \quad \text{ and } \quad \varphi_I(\alpha_k^A) = - \alpha_k^{A_I}.
	\end{equation}
	
	We claim that $\varphi_I$ is well-defined and is indeed a graded ring isomorphism such that $\varphi_I(2y_i^{A})=2y^{A_{I}}_i$ for every $i=1,\dots,n$. The well-definedness follows by showing that $\varphi_I(x_i^A(x_i^A-\alpha_i^A)) = 0$ for all $i$. 
	\begin{itemize}
		\item For $i < k$, we get $\varphi_I(x_i^A(x_i^A-\alpha_i^A)) = x_i^{A_{I}} (x_i^{A_{I}} - \alpha_i^{A_{I}}) = 0$;
		\item for $i = k$, by~\eqref{eq_xkA_op1}, we have
		\[
		\varphi_I(x_k^A(x_k^A-\alpha_k^A))  = (x_k^{A_I} - \alpha_k^{A_I})(x_k^{A_I} - \alpha_k^{A_I} - \varphi_I(\alpha_k^A)) = x_k^{A_I}(x_k^{A_I} - \alpha_k^{A_I}) = 0;
		\]
	\item for $i > k$, we have $\varphi_I(x_i^A(x_i^A-\alpha_i^A)) = x_i^{A_I}(x_i^{A_I} - \varphi_I(\alpha_i^A)) = x_i^{A_{I}} (x_i^{A_{I}} - \alpha_i^{A_{I}}) = 0$;
	\end{itemize}
	where the second last equality is obtained from \eqref{eq_op1_isom} and (3):
	\begin{equation}\label{eq_alpha}
		\varphi_I(\alpha_i^{A}) =  \sum_{j<i} a_{i,j} \varphi_I(x_j^{A}) = \sum_{j\neq k, j < i} a_{i,j} x_j^{A_{I}} + a_{i,k} \left(x_k^{A_{I}} + \sum_{j < k} a_{k,j} x_j^{A_{I}} \right) = \alpha^{A_{I}}_i \qquad \text{ for } i > k.
	\end{equation}
	
	To show  $\varphi_I(2y_i^{A})=2y^{A_{I}}_i$, we only need to check the case when $i = k$ because $\varphi_I(x_i^{A}) = x_i^{A_{I}}$ and $\varphi_I(\alpha_i^{A}) = \alpha^{A_{I}}_i$ for $i \neq k$ by 
	\eqref{eq_op1_isom} and \eqref{eq_alpha}.
	Then, by~\eqref{eq_xkA_op1}, we obtain
	\[
		\varphi_I(2y_k^{A}) = \varphi_I(2x_k^{A} - \alpha_k^{A}) =
		2(x_k^{A_I}-\alpha_k^{A_I}) + \alpha_k^{A_I} 
		= 2x_k^{A_{I}} - \alpha_k^{A_{I}} = 2y_k^{A_{I}}
	\]
	and this completes the proof for the case of $I= \{k\}^c$.
	
	For a general $I = \{i_1 < \cdots < i_m\} \subset [n]$, using the fact  
	\(
		L_I = L_{\{i_1\}^c} \cdot L_{\{i_2\}^c} \cdots L_{\{i_m\}^c},
	\)
	we obtain 
	\[
	\varphi_I = \varphi_{\{i_1\}^c} \circ \varphi_{\{i_2\}^c} \circ \cdots \circ \varphi_{\{i_m\}^c}.
	\]
	Applying the previous procedure repeatedly, the result follows. One can immediately check that $\varphi_I$ is $c_1$-preserving by \eqref{eq_y}. 
\end{proof}
For the cohomology ring isomorphism between Bott manifolds induced from $\ooo$, we recall the result~\cite{CMM}.
\begin{proposition}[{\cite[Lemma~6.1]{CMM}}]\label{prop_iso_op2}
	Let $A \in \mathcal{M}_n$. 
	For a permutation $\pi$ on $[n]$, if $A_{\pi}\in \mathcal{M}_{n}$, then there is a $c_1$-preserving graded ring isomorphism
	\[
	\varphi_{\pi} \colon H^{\ast}(\B(A); \Z) \to H^{\ast}(\B(A_{\pi});\Z)
	\]
	such that $\varphi_{\pi}(x_{i}^A) = x_{\pi(i)}^{A_{\pi}}$ for $i = 1,\dots,n$. Indeed, we have $\varphi_{\pi}(2y_{i}^A) = 2y_{\pi(i)}^{A_{\pi}}$ for $i = 1,\dots,n$.
\end{proposition}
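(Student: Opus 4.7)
The plan is to define $\varphi_\pi$ on generators by $\varphi_\pi(x_i^A) = x_{\pi(i)}^{A_\pi}$, extend multiplicatively to $\Z[x_1,\dots,x_n]$, and then verify in order: well-definedness on the quotient, the stronger identity $\varphi_\pi(2y_i^A) = 2y_{\pi(i)}^{A_\pi}$, and $c_1$-preservation. The first step is a clean analysis of how entries of $A_\pi$ relate to those of $A$. Since $A_\pi = P_\pi A P_\pi^{-1}$, a direct matrix computation gives $(A_\pi)_{\pi(i),\pi(j)} = a_{i,j}$ for all $i,j$. The hypothesis that $A_\pi$ is again a Bott matrix, i.e., lower triangular, then forces $a_{i,j} = 0$ whenever $j < i$ but $\pi(j) > \pi(i)$; equivalently, $\pi$ is a linear extension of the partial order on $[n]$ generated by the nonzero subdiagonal entries of $A$. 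I would record this as a short preliminary lemma.

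For well-definedness I must send each relation $x_i^2 - \alpha_i^A x_i$ in the presentation~\eqref{eq_coh_bott} to zero. Applying $\varphi_\pi$ yields
\[
(x_{\pi(i)}^{A_\pi})^2 - \sum_{j<i} a_{i,j}\, x_{\pi(j)}^{A_\pi} x_{\pi(i)}^{A_\pi}.
\]
By the combinatorial constraint above, the terms with $\pi(j) > \pi(i)$ vanish, so the sum collapses to $\sum_{\pi(j) < \pi(i)} (A_\pi)_{\pi(i),\pi(j)}\, x_{\pi(j)}^{A_\pi} x_{\pi(i)}^{A_\pi} = \alpha_{\pi(i)}^{A_\pi} x_{\pi(i)}^{A_\pi}$, which matches $(x_{\pi(i)}^{A_\pi})^2$ by the defining relation in $H^*(\B(A_\pi);\Z)$. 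Hence $\varphi_\pi$ descends to a graded ring homomorphism; it is an isomorphism because the symmetric construction $\varphi_{\pi^{-1}}$ provides its inverse.

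The same reindexing immediately gives the $y$-statement. Using $2y_i^A = 2x_i^A - \alpha_i^A$, I compute
\[
\varphi_\pi(2y_i^A) = 2x_{\pi(i)}^{A_\pi} - \sum_{j<i} a_{i,j}\, x_{\pi(j)}^{A_\pi} = 2x_{\pi(i)}^{A_\pi} - \sum_{\pi(j)<\pi(i)} (A_\pi)_{\pi(i),\pi(j)}\, x_{\pi(j)}^{A_\pi} = 2y_{\pi(i)}^{A_\pi},
\]
where the second equality again uses that the dropped and added terms are zero. Summing over $i$ and invoking~\eqref{eq_first_Chern} then shows $\varphi_\pi(c_1(\B(A))) = c_1(\B(A_\pi))$, giving the $c_1$-preservation for free.

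I do not anticipate any serious obstacle; the argument is essentially combinatorial bookkeeping. The one place to be careful is the reindexing in well-definedness, where both vanishings must be used simultaneously: $a_{i,j} = 0$ for $j > i$ (lower triangularity of $A$) and $a_{i,j} = 0$ for $j < i$ with $\pi(j) > \pi(i)$ (lower triangularity of $A_\pi$). Once this is in place, the rest is a direct verification.
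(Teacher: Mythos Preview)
Your argument is correct. The paper does not actually prove this proposition; it is imported verbatim from \cite[Lemma~6.1]{CMM} and stated without proof, so there is no ``paper's approach'' to compare against. Your self-contained verification---computing $(A_\pi)_{\pi(i),\pi(j)}=a_{i,j}$, extracting the lower-triangularity constraint $a_{i,j}=0$ for $j<i$ with $\pi(j)>\pi(i)$, and then checking well-definedness, the $y$-identity, and $c_1$-preservation by direct reindexing---is exactly the natural proof and goes through cleanly. The only place to be slightly cautious is the convention for $P_\pi$: with the paper's stated convention (a $1$ in position $(i,\pi(i))$) a literal matrix computation gives $(A_\pi)_{k,\ell}=a_{\pi(k),\pi(\ell)}$, which is your formula with $\pi$ replaced by $\pi^{-1}$; however, your version is the one compatible with the proposition's assertion $\varphi_\pi(x_i^A)=x_{\pi(i)}^{A_\pi}$ and with the paper's worked example of {\bf Op.2}, so the discrepancy is a harmless convention issue in the paper rather than a flaw in your proof.
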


\section{Main Theorem}
\label{secMainTheorem}

In this section, we will prove Theorem \ref{thm_main}. Throughout this section, every cohomology ring will take coefficient in $\Z$ unless otherwise stated. 

Before to begin with, we explain some notations used in this section. We use letters $\B$, $\B'$ and $\B''$ to indicate Bott manifolds. 
And also we denote by  $(x_i,y_i,\alpha_i)$, $(x_i', y_i', \alpha_i')$, and $(x_i'', y_i'', \alpha_i'')$
the elements $x$, $y$, and $\alpha$ defined in \eqref{eq_coh_bott} and \eqref{eq_y} for $\B, \B',$ and $\B''$, respectively.

\begin{lemma}\label{lem_sigma_1}
Let $\B$ and $\B'$ be Fano Bott manifolds.
If there is a $c_1$-preserving graded ring isomorphism $\varphi\colon H^\ast(\B)\to H^\ast(\B')$, then there exists a Fano Bott manifold $\B''$ together with a $c_1$-preserving 
graded ring isomorphism $\psi\colon H^*(\B) \rightarrow H^*(\B'')$ such that $\B'$ and $\B''$ are isomorphic and $\psi(x_1)=x_1''$. In particular, we have $\psi(2y_1) = 2y_1''$.
\end{lemma}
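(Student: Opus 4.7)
The plan is to invoke Proposition~\ref{prop_isomorphism_y} and Theorem~\ref{thm_q} to produce a permutation $\sigma$ on $[n]$ with $\varphi(y_i)=y_{\sigma(i)}'$, set $k\coloneqq\sigma(1)$, and then construct $\B''$ from $\B'$ by a single {\ooo}-operation that moves the $k$-th pair of rays into position~$1$. If $k=1$, I can take $\B''=\B'$ and $\psi=\varphi$: since $\alpha_1=\alpha_1'=0$ (empty sums), $x_1=y_1$ and $x_1'=y_1'$, so $\psi(x_1)=x_1'$.

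The essential case is $k\ge 2$, and the crucial observation is that $\alpha_k^{A'}=0$ holds automatically. Indeed, because $\varphi$ is an integral isomorphism and $x_1=y_1$,
\[
\varphi(x_1)=\varphi(y_1)=y_k^{A'}=x_k^{A'}-\frac{1}{2}\alpha_k^{A'}
\]
must lie in $H^2(\B';\Z)$, which forces $\alpha_k^{A'}\in 2H^2(\B';\Z)$. Since $\{x_j^{A'}\}_{j=1}^{n}$ is a $\Z$-basis of $H^2(\B';\Z)$, every coefficient $a_{k,j}'$ with $j<k$ must be even; but Theorem~\ref{thm_Fano} constrains $a_{k,j}'\in\{-1,0,1\}$, so each $a_{k,j}'=0$ and hence $\alpha_k^{A'}=0$.

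Given $\alpha_k^{A'}=0$, I would then introduce the permutation $\pi$ on $[n]$ defined by $\pi(k)=1$, $\pi(j)=j+1$ for $1\le j<k$, and $\pi(j)=j$ for $j>k$. Checking that $A'_\pi$ is a Bott matrix reduces to verifying that $\pi(i)>\pi(j)$ whenever $i>j$ with $a_{i,j}'\neq 0$; the only potentially problematic case is $i=k$ with $j<k$, which demands exactly $a_{k,j}'=0$---now in hand. By Proposition~\ref{prop-op}, $\B''\coloneqq\B(A'_\pi)$ is isomorphic to $\B'$ as toric varieties and is therefore Fano; and by Proposition~\ref{prop_iso_op2} there is a $c_1$-preserving ring isomorphism $\varphi_\pi\colon H^*(\B')\to H^*(\B'')$ with $\varphi_\pi(x_i^{A'})=x_{\pi(i)}''$. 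Setting $\psi\coloneqq\varphi_\pi\circ\varphi$ and using $\alpha_k^{A'}=0$ (so that $\varphi(x_1)=x_k^{A'}$), I obtain $\psi(x_1)=\varphi_\pi(x_k^{A'})=x_{\pi(k)}''=x_1''$, as required.

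The main obstacle, a priori, would be justifying the applicability of a suitable {\ooo}-operation to $A'$: one might expect to have to first carry out an {\oo}-operation to arrange $\alpha_k=0$ before Op.2 can apply. But as shown above, the Fano hypothesis combined with the integrality of $\varphi$ delivers $\alpha_k^{A'}=0$ for free, bypassing any need for a preparatory {\oo}-operation. This is the essential place where Fano-ness enters the argument.
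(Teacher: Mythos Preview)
Your proof is correct and follows essentially the same route as the paper's own argument: use Theorem~\ref{thm_q} to get $\varphi(y_1)=y'_{\sigma(1)}$, exploit the integrality of $\varphi(x_1)$ together with the Fano constraint $a'_{\sigma(1),j}\in\{-1,0,1\}$ to force $a'_{\sigma(1),j}=0$ for $j<\sigma(1)$, and then apply an {\ooo}-operation with the cyclic permutation moving position $\sigma(1)$ to position~$1$ (your $\pi$ is exactly the paper's $s_1s_2\cdots s_{\sigma(1)-1}$). The paper phrases the divisibility step via $2(\varphi(x_1)-x'_{\sigma(1)})=-\sum_{j<\sigma(1)}a'_{\sigma(1),j}x'_j$, while you phrase it via $\varphi(x_1)=x'_k-\tfrac12\alpha'_k\in H^2(\B';\Z)$, but these are equivalent; your explicit verification that $A'_\pi$ is a Bott matrix is a nice addition the paper leaves implicit.
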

\begin{proof}
	Suppose that  Fano Bott manifolds $\B$ and $\B'$ are determined by Bott matrices  $A = (a_{ij})$ and $A' = (a_{ij}')$, respectively.
By Theorem \ref{thm_q}, there exists a permutation $\sigma$ on $[n]$ such that $\varphi(2y_i) = 2y_{\sigma(i)}'$ for each $i = 1,\dots, n$.
If $\sigma(1)\neq 1$, then we get $a'_{\sigma(1),j}=0$ for every $j<\sigma(1)$. Indeed, since $2y_1=2x_1$ and $2y_{\sigma(1)}'=2x_{\sigma(1)}'-\sum_{j<\sigma(1)} a'_{\sigma(1),j}x_j'$, we get
\[
	2(\varphi(x_1)-x_{\sigma(1)}') = -\sum_{j<\sigma(1)} a'_{\sigma(1),j}x_j'
\]
which is divisible by $2$. 
Since $a'_{\sigma(1),j}$ belongs to $\{0,\pm1\}$, we  conclude that $a'_{\sigma(1),j}=0$ for every $j<\sigma(1)$ and $\varphi(x_1)=x_{\sigma(1)}'$.
This fact tells us that the $\sigma(1)$-th row of $A'+E$ is zero, and therefore we may apply {\ooo} to $A'$ for the permutation $\pi=s_{1}s_{2}\dots s_{\sigma(1)-1}$ where $s_i$ denotes the 
simple transposition $(i,i+1)$.
Here, we note that $(\pi \circ \sigma )(1) = 1$.

We consider a Bott matrix $A'' \coloneqq A'_{\pi}$
and let $\B''$ be a  Bott manifold associated to ~$A''$. Suppose that $\B''$ is Fano. Then $\B'$ and $\B''$ are isomorphic and there is a graded ring isomorphism $\varphi'\colon H^\ast(\B')\to H^\ast(\B'')$ such that 
$\varphi'(x_{\sigma(1)}')=x_{\pi(\sigma(1))}'' = x_1''$ by Proposition~\ref{prop_iso_op2}. Then $ \varphi' \circ \varphi$ is the desired isomorphism and it completes the proof.
\end{proof}

\begin{theorem}[Theorem~\ref{thm_main}]\label{thm_main_body}
Let $\B$ and $\B'$ be Fano Bott manifolds. Assume that there is a $c_1$-preserving graded ring isomorphism $\varphi \colon H^\ast(\B)\to H^\ast(\B')$. Then $\B$ and $\B'$ are isomorphic as toric varieties.
\end{theorem}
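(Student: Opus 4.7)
I would prove Theorem~\ref{thm_main_body} by induction on $n=\dim_\C\B$. The base case $n=1$ is immediate, since $\C P^1$ is the unique one-dimensional Bott manifold. For the inductive step, I first apply Lemma~\ref{lem_sigma_1} to replace $\B'$ by a toric-isomorphic Fano Bott manifold $\B''$, producing a $c_1$-preserving isomorphism $\psi\colon H^*(\B)\to H^*(\B'')$ with $\psi(x_1)=x_1''$. Since $x_1^2=0=(x_1'')^2$, the principal ideals $(x_1)$ and $(x_1'')$ correspond under $\psi$, so $\psi$ descends to a graded ring isomorphism $\overline{\psi}\colon H^*(\B)/(x_1)\to H^*(\B'')/(x_1'')$.

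Writing $A$ and $A''$ for the Bott matrices of $\B$ and $\B''$, I would identify $H^*(\B)/(x_1)$ with $H^*(\B(A_*))$, where $A_*$ is the $(n-1)\times(n-1)$ Bott matrix obtained by deleting the first row and column of $A$, and analogously for $A''_*$. A column-by-column inspection of Suyama's three cases in Theorem~\ref{thm_Fano} shows that $A_*$ is again a Fano Bott matrix, since the first entry of every column of $A+E$ vanishes. Using~\eqref{eq_y} and~\eqref{eq_first_Chern}, $2y_1$ reduces to zero modulo $(x_1)$ and each $y_i$ for $i\geq 2$ reduces to the corresponding class on $\B(A_*)$, so $\overline{\psi}$ is itself $c_1$-preserving. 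The inductive hypothesis then produces an isomorphism $\B(A_*)\cong\B(A''_*)$ of toric varieties.

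For the lifting step, Proposition~\ref{prop-op} lets me express the isomorphism $A_*\cong A''_*$ as a composition of Op.1 and Op.2 moves on the $(n-1)\times(n-1)$ matrices, and I would promote each such move to a move on the full $n\times n$ Bott matrix by enlarging subsets to contain the index~$1$ and extending permutations by $1\mapsto 1$. A block-matrix calculation confirms that each lifted move enacts the prescribed operation on the lower-right block, and Propositions~\ref{prop_iso_op1} and~\ref{prop_iso_op2} guarantee that the accompanying cohomology isomorphism is still $c_1$-preserving and continues to send $x_1$ to $x_1''$. After this sequence of lifts I may assume that $A$ and $A''$ share the same lower-right block $A_*$, so only their first columns can still differ.

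The remaining task is to show that two Fano Bott matrices sharing the lower-right block and linked by a $c_1$-preserving isomorphism with $\psi(x_1)=x_1''$ have first columns related by further Op.1 and Op.2 moves. Both first columns are Fano, hence fall into one of Suyama's three types. Expanding $c_1=2\sum y_i$ and comparing the coefficient of $x_1''$ in $\psi(c_1(\B))=c_1(\B'')$, together with the form $\psi(2y_i)=2y_{\sigma(i)}''$ from Theorem~\ref{thm_q}, yields congruences modulo $2$ linking the entries $a_{i,1}$ and $a''_{\sigma(i),1}$. Since both entries lie in $\{-1,0,1\}$ by the Fano hypothesis, these congruences pin the first columns down up to the residual symmetries provided by Op.1 with $I\subset\{2,\dots,n\}$ and permutations fixing~$1$, and a short case analysis on Suyama's type completes the argument. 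The main obstacles I anticipate are twofold: first, the validity of a lifted Op.2 move requires the conjugated matrix to remain lower triangular, which is not automatic and may force one to interleave preliminary Op.1 moves; second, in Case~(3) of Suyama, where the first column copies entries from a later column, ensuring that any Op.1 used to match the first columns does not disturb the already-matched lower block requires careful bookkeeping.
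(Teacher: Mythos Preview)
Your approach is genuinely different from the paper's. The paper does not induct on~$n$; instead it runs an index-by-index straightening: assuming $\varphi(x_j)=x_j'$ for all $j<k$, it analyzes $\varphi(2y_k)=2y'_{\sigma(k)}$, uses the Fano bound $a_{k,j},a'_{k,j}\in\{0,\pm1\}$ to turn the divisibility $2\mid(\varphi(\alpha_k)-\alpha_k')$ into the exact identity $a_{k,j}=-a'_{k,j}$ (or forces $a'_{\sigma(k),\ell}=0$ for $k\le\ell<\sigma(k)$), and then applies a single {\bf Op.1} or {\bf Op.2} to push the agreement to index~$k$. The crucial point is that the strong running hypothesis $\varphi(x_j)=x_j'$ for $j<k$ makes $\varphi(\alpha_k)=\sum_{j<k}a_{k,j}x_j'$ explicit, which is exactly what drives the mod-$2$ argument.

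Your reduction (quotienting by $x_1$, checking that $A_*$ is Fano, and lifting the moves so that $x_1\mapsto x_1''$ is preserved) is sound in outline, but the final step has a real gap. After the lifts you only know $\psi(x_1)=x_1''$ and that the lower-right blocks agree; you do \emph{not} know $\psi(x_j)$ for $j\ge 2$, and the induced $\overline\psi$ on $H^*(\B(A_*))$ is an arbitrary $c_1$-preserving automorphism, not the identity (the inductive hypothesis yields only an isomorphism of varieties, not that $\overline\psi$ comes from one). Consequently your proposed extraction of ``congruences mod~$2$ linking $a_{i,1}$ and $a''_{\sigma(i),1}$'' fails: writing $\psi(x_j)=\sum_k b_{jk}x_k''$, the $x_1''$-coefficient of $\psi(2y_i)$ involves the unknown integers $b_{j,1}$ for $2\le j<i$, so no clean parity relation between $a_{i,1}$ and $a''_{\sigma(i),1}$ emerges. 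Moreover, the moves that leave the already-matched lower block untouched are extremely restricted (an {\bf Op.2} fixing~$1$ must be an automorphism of $A_*$, and an {\bf Op.1} with $1\in\tilde I$ also acts on the block), so your ``short case analysis'' would have to produce moves that simultaneously rearrange the block and the first column---which is essentially the content of the paper's full argument. In short, the hard Fano-specific work has been deferred to the last paragraph rather than removed; to make your strategy go through you would need to strengthen the inductive statement so that after matching the lower block you also control $\psi(x_j)$ for $j\ge 2$, and at that point you recover the paper's row-by-row invariant.
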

\begin{proof}
	Suppose that  Fano Bott manifolds $\B$ and $\B'$ are determined by Bott matrices  $A = (a_{ij})$ and $A' = (a_{ij}')$, respectively.
From Theorem \ref{thm_q},
there is a permutation $\sigma$ on $[n]$ such that $\varphi(2y_i)=2y_{\sigma(i)}'$ for all $i=1,\dots,n$. We may further assume that $\varphi(x_{1})=x_{1}'$ (or equivalently $\varphi(2y_1) = 2y_1'$) 
by Lemma~\ref{lem_sigma_1}, i.e., $\sigma(1)=1$. 

Now we choose an index $k\in [n]$ (with $k > 1$) such that $\varphi(x_i)=x'_i$ for every $i<k$ and $\varphi(x_k) \neq x_k'$, that is, $\varphi(2y_{i})=2y_{i}'$ for all $i < k$. Then there are two possibilities:
\begin{enumerate}
\item $\sigma(k)=k$; or
\item $\sigma(k)>k$.
\end{enumerate}
 
We first consider the case where  $\sigma(k)=k$. That is, $\varphi(2y_{k})=2y_{k}'$ and $\varphi(x_{k})\neq x_{k}'$.
Since $\varphi(2y_k)=\varphi(2x_k-\alpha_k)=2\varphi(x_k)-\varphi(\alpha_k)$ and $\varphi(2y_k)=2y_k'=2x_k'-\alpha_k',$
we get 
\begin{equation}\label{eq_proof_of_main_1}
	2(\varphi(x_k)-x_k')=\varphi(\alpha_k)-\alpha_k'
\end{equation} and hence 
$\varphi(\alpha_k)-\alpha_k'$ is divisible by $2$. Note that $\varphi(\alpha_k)-\alpha_k' = \sum_{j<k}(a_{k,j}-a'_{k,j})x_j'$ by the definition of $\alpha_i$ and $\alpha_i'$ in~\eqref{eq_coh_bott}
and from the induction hypothesis. Comparing this with the equation~\eqref{eq_proof_of_main_1},  
we have 
\(
	\varphi(x_k)=x_k'+\sum_{j<k} c_j x_j',
\)
where 
\begin{equation}\label{eq_cj}
2c_j=a_{k,j}-a'_{k,j} \qquad  \text{ for }j=1,\dots,k-1
\end{equation}
which implies that 
\begin{equation}\label{eq_cj_neq_0}
	c_j = a_{k,j}=-a_{k,j}' \quad \text{when $c_j \neq 0$}
\end{equation}
by the Fano condition in Theorem~\ref{thm_Fano}.
Note that 
\begin{equation}\label{eq_notallzero}
	\text{not every $c_j$ is zero for $j<k$ by our assumption.}
\end{equation}

Now we claim that the equation~\eqref{eq_cj_neq_0} holds even when $c_j = 0$.
By the well-definedness of the isomorphism $\varphi$, we have that
\begin{align*}
0 = \varphi(x_k(x_k-\alpha_k))&=\left(x_k'+\sum_{j<k} c_j x_j'\right)\left(x_k'+\sum_{j<k} c_j x_j' -\sum_{j<k} a_{k,j}x_j'\right)\\
&=(x_k')^2+x_k'\left(2\sum_{j<k} c_j x_j' -\sum_{j<k}a_{k,j} x_j'\right) +\left(\sum_{j<k} c_j x_j'\right)\left(\sum_{j<k} c_j x_j' -\sum_{j<k}a_{k,j} x_j'\right)\\
&=(x_k')^2+x_k'\sum_{j<k} (2c_j - a_{k,j}) x_j' +\left(\sum_{j<k} c_j x_j'\right)\left(\sum_{j<k} (c_j -a_{k,j}) x_j'\right) 
\end{align*} in $H^{\ast}(\B')$.
Since 
	\begin{itemize}	
		\item there is no term having $x_k'$ in $\left(\sum_{j<k} c_j x_j'\right)\left(\sum_{j<k} (c_j -a_{k,j}) x_j'\right)$, and 
		\item $(x_k')^2+x_k'\sum_{j<k} (2c_j - a_{k,j}) x_j'$ can be expressed as a linear combination of the linearly independent set $\{x_k'x_j'\}_{j < k}$, 
	\end{itemize}
	both $(x_k')^2+x_k'\sum_{j<k} (2c_j - a_{k,j}) x_j'$ and  $(\sum_{j<k} c_j x_j')(\sum_{j<k} (c_j -a_{k,j}) x_j')$ vanish in $H^\ast(\B')$.  
	In particular, it follows from the vanishing of the latter term above that 
\begin{align*}
0 = \left(\sum_{j<k} c_j x_j'\right)\left(\sum_{j<k} (c_j - a_{k,j}) x_j'\right)
&=\left(\sum_{j<k \atop c_j\neq 0}c_j x_j'\right)\left(-\sum_{\ell<k \atop c_\ell=0} a_{k,\ell} x_\ell'\right)=-\sum_{j,\ell<k \atop c_j\neq 0 \text{ and }c_\ell=0} c_j a_{k,\ell} x'_jx'_\ell
\end{align*} in $H^{\ast}(\B)$, where the second equality follows from \eqref{eq_cj_neq_0}.
However, $\{x'_jx'_\ell \mid c_j \neq 0, ~c_\ell = 0, ~j,\ell < k\}$ is linearly independent by Remark \ref{rem_linearly_independent} (2), so
$a_{k,\ell} = 0$ if $c_\ell = 0$. Moreover, since $2c_j = a_{k,j} - a_{k,j}'$ by \eqref{eq_cj},  we  conclude that
\begin{itemize}
	\item if $c_j = 0$, then $a_{k,j}=a_{k,j}'=0$, and 
	\item if $c_j \neq 0$, then $a_{k,j}=-a_{k,j}'$ by \eqref{eq_cj_neq_0}.
\end{itemize}
Consequently, we have $c_j = a_{k,j} = -a_{k,j}'$ for every $j < k$.
Therefore, we obtain 
\[
\varphi(x_k) = x_k' + \sum_{j<k} c_j x_j' = x_k' - \sum_{j<k} a_{k,j}' x_j' 
= x_k' - \alpha_k'.
\]

Now, we let $\B''$ be a Fano Bott manifold whose Bott matrix $A''$ is obtained from $A'$ by applying {\oo} with $I=\{k\}^c$. Then there is a $c_1$-preserving graded ring isomorphism $\varphi'\colon H^\ast(\B')\to H^\ast(\B'')$ such that $\varphi'(x_j')=x_j''$ for every $j \neq k$ and $\varphi'(x_k')=x_k''-\alpha_k''$ by \eqref{eq_op1_isom} and~\eqref{eq_xkA_op1}, and also
$\varphi'(2y_i')=2y_i''$ for all $i$ by Proposition \ref{prop_iso_op1}. 
Then,  the composition $\psi\coloneqq\varphi'\circ \varphi$ is a graded ring isomorphism $\psi\colon H^\ast(\B)\to H^\ast(\B'')$ such that $\psi(2y_i)=2y_i''$ for every $i=1,\dots,n$, $\B'$ and $\B''$ are isomorphic as toric varieties, and $\psi(x_j)=x_j''$ for every $j\leq k$. Indeed, by~\eqref{eq_xkA_op1}, we have
\[
\psi(x_k) = \varphi'(x_k' - \alpha_k') = x_k '' - \alpha_k'' + \alpha_k'' = x_k''.
\]

Now, we consider the second case, $\sigma(k)>k$. Since $\varphi(2y_k)=2\varphi(x_{k})-\sum_{j<k}a_{k,j}x_j'$ and $\varphi(2y_k)=2y_{\sigma(k)}'$, we get
\begin{equation}\label{eq_final}
	2(\varphi(x_k)-x_{\sigma(k)}')=\sum_{j<k}a_{k,j}x_j' - \sum_{\ell<\sigma(k)} a'_{\sigma(k),\ell} x_\ell'.
\end{equation}
Because $a_{k,j}$ and $a_{\sigma(k),\ell}$ belong to $\{0,1,-1\}$ and the left hand side of~\eqref{eq_final} is divisible by~$2$, we have
\[
	a'_{\sigma(k),\ell}=0   \qquad \text{ for } k\leq \ell <\sigma(k). 
\]
Indeed, the $\sigma(k)$-th row of $A'$ has consecutive zeros from $(\sigma(k), k)$ to $(\sigma(k), \sigma(k)-1)$.
Therefore by applying {\ooo} to $A'$ for the permutation $\pi=s_{k}\cdots s_{\sigma(k)-1}$, we get  a new  Bott matrix $A''$ such that $\B(A'')$ is Fano and it is isomorphic to $\B'$ and there is a graded ring isomorphism $\varphi' \colon H^*(\B') \rightarrow H^*(\B'')$ satisfying 
\begin{equation*}
\varphi'(x_{i}')= \begin{cases}
x_{i}'' &\text{ for }i<k\text{ or }i>\sigma(k),\\
x_{i+1}'' &\text{ for }k\leq i\leq \sigma(k)-1,\\
x_{k}'' &\text{ for }i=\sigma(k)
\end{cases}
\end{equation*}
by Proposition \ref{prop_iso_op2}. Since $\pi\circ\sigma(i)=i$ for every $i\leq k$, the composition $\varphi'\circ\varphi$ is a $c_1$-preserving 
graded ring isomorphism $\varphi'\circ\varphi\colon H^{\ast}(\B)\to H^{\ast}(\B'')$ satisfying 
$$\varphi'\circ\varphi(x_{i})= x_i''\quad(i<k) \quad\text{ and }\quad\varphi'\circ\varphi(2y_{k})=\varphi'(2y_{\sigma(k)}')=2y_{\pi\circ\sigma(k)}=2y_{k}''.$$
Hence this case reduces to the first case.

We may repeat the above argument as many times as necessary. Since the indices are bounded above by $n$, this process must stop. Therefore, we have a Fano Bott manifold $\widetilde{\B}$ isomorphic to $\B'$ whose Bott matrix is exactly the same as that of $\B$, i.e., $\B$ and $\B'$ are isomorphic. 
This finishes the proof.
\end{proof}

\begin{remark}\label{rem_weak_Fano}
	We cannot extend Theorem \ref{thm_main} to weak Fano Bott manifolds. Note that the Hirzerbuch surfaces $\mathcal{H}_0$ and $\mathcal{H}_2$ are weak Fano Bott manifolds.
	As we saw in Example~\ref{ex-hir}, $\mathcal{H}_0$ and $\mathcal{H}_2$ are not isomorphic but there is a $c_{1}$-preserving graded cohomology ring isomorphism between them.
\end{remark}

\begin{remark}
	One may wonder whether we can extend Theorem~\ref{thm_main} to Bott manifolds whose Bott matrices have entries $0,1,$ or $-1$. 
	However, the set of such Bott matrices is not closed under the operation~{\oo}.
	For example, consider a matrix 
	\[
	A = \begin{bmatrix}
	-1 & 0 & 0 \\ 1 & -1 & 0 \\ 1 & 1 & -1
	\end{bmatrix}.
	\]
	Then, we have that
	\[
	A_{\emptyset} = A^{-1} 
	= \begin{bmatrix}
	-1 & 0 & 0 \\ -1 & -1 & 0 \\ -2 & -1 & -1 
	\end{bmatrix}
	\]
	whose entry has $-2 \notin \{0,1,-1\}$. 
	Note that the set of Bott matrices obtained from Fano Bott manifolds is closed under the operation {\oo} as we mentioned in Remark~\ref{rmk_op1_preserves_Fano}. 	
\end{remark}

\begin{remark}
For $n = 3$, there are five Bott matrices associated to Fano Bott manifolds up to isomorphisms.

\noindent\begin{tabular}{l|ccccc}
\toprule
Oda's list & $6$ & $7$ & $8$ & $9$ & $10$\\
\O bro's list & $21$& $11$ & $18$ & $17$ & $12$ \\
\midrule 
$
\begin{array}{ll}
\text{Bott} \\
\text{matrix}
\end{array}$
& {\small$	\begin{bmatrix}
-1&0&0\\
0&-1&0\\
0&0&-1
\end{bmatrix}$}
& {\small $\begin{bmatrix}
-1&0&0\\
0&-1&0\\
1&1&-1
\end{bmatrix}$}
&
{\small $\begin{bmatrix}
-1&0&0\\
0&-1&0\\
1&-1&-1
\end{bmatrix}$}
& {\small $\begin{bmatrix}
-1&0&0\\
1&-1&0\\
0&0&-1
\end{bmatrix}$}
& {\small $\begin{bmatrix}
-1&0&0\\
1&-1&0\\
0&1&-1
\end{bmatrix}$} \\
\bottomrule
\end{tabular}

\noindent Here, we refer the lists provided by Oda~\cite[Figure~2.6]{Oda} and \O bro~\cite{Obro07} in the Graded Ring Database~\cite{GRDB}.
However, the Bott manifolds corresponding to the second and the third matrices are diffeomorphic since their cohomology rings are isomorphic as graded rings. 
Indeed, the degrees of the corresponding Bott manifolds are different, so there does not exist $c_1$-preserving cohomology ring isomorphism between them. 
Note that the family of Bott manifolds of complex dimension at most~$4$ is cohomologically rigid. See~\cite{CMS10_trans,Choi15}.
\end{remark}

%

\newcommand{\etalchar}[1]{$^{#1}$}
\providecommand{\bysame}{\leavevmode\hbox to3em{\hrulefill}\thinspace}
\providecommand{\MR}{\relax\ifhmode\unskip\space\fi MR }
\providecommand{\MRhref}[2]{%
	\href{http://www.ams.org/mathscinet-getitem?mr=#1}{#2}
}
\providecommand{\href}[2]{#2}

\end{document}